\newtheorem{theorem}{Theorem}
\newtheorem{lem}{Lemma}[section]
\newtheorem{prop}[lem]{Proposition}
\newtheorem{corollary}[lem]{Corollary}
\newtheorem{conjecture}{Conjecture}
\newcommand{\cC}{\mathcal{C}}
\newcommand{\Fq}{\mathbb{F}_q}
\newcommand{\SL}{\operatorname{SL}}
\newcommand{\mindeg}{\mathrm{mindeg}}
\newcommand{\minclass}{\mathrm{minclass}}
\renewcommand{\leq}{\leqslant}
\renewcommand{\geq}{\geqslant}
\begin{document}

\title{A generalization of a theorem of Rodgers and Saxl for simple groups of bounded rank}

\author{N. Gill}
\address{N. Gill, Department of Mathematics, University of South Wales, Treforest, CF37 1DL, U.K.}
\email{nick.gill@southwales.ac.uk}

\author{L. Pyber}
\address{A. R\'enyi Institute of Mathematics, Hungarian Academy of Sciences, P.O. Box 127, H-1364 Budapest}
\email{pyber@renyi.hu}

\author{E. Szab\'o}
\address{A. R\'enyi Institute of Mathematics, Hungarian Academy of Sciences, P.O. Box 127, H-1364 Budapest}
\email{endre@renyi.hu}


\subjclass[2010]{20D06, 20D40, 20E45}
\keywords{Normal subsets, conjugate subsets, Product Theorem}

\begin{abstract}
We prove that if $G$ is a finite simple group of Lie type and $S_1,\dots, S_k$ are subsets  of $G$ satisfying $\prod_{i=1}^k|S_i|\geq|G|^c$ for some $c$ depending only on the rank of $G$, then there exist elements $g_1,\dots, g_k$ such that $G=(S_1)^{g_1}\cdots (S_k)^{g_k}$. This theorem generalizes an earlier theorem of the authors and Short. 

We also propose two conjectures that relate our result to one of Rodgers and Saxl pertaining to conjugacy classes in $\SL_n(q)$, as well as to the Product Decomposition Conjecture of Liebeck, Nikolov and Shalev.
\end{abstract}

\maketitle

\section{Introduction}

This note is inspired by two earlier results.
One of them is the following theorem of Rodgers and Saxl \cite{rodgers_saxl}:

\begin{theorem}\label{t: rs}
 Suppose that $\cC_1,\dots, \cC_k$ are conjugacy classes in $\SL_n(q)$ such that $\prod_{i=1}^k |\cC_i|\geq |\SL_n(q)|^{12}$. Then $\prod_{i=1}^k\cC_i = \SL_n(q)$.
\end{theorem}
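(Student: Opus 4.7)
The natural approach is character-theoretic, via the Frobenius class-multiplication formula.  For any $g\in G=\SL_n(q)$, the number of ways to write $g=x_1\cdots x_k$ with $x_i\in\cC_i$ equals
\[
\frac{\prod_i |\cC_i|}{|G|}\sum_{\chi\in\mathrm{Irr}(G)}\frac{\chi(x_1)\cdots\chi(x_k)\overline{\chi(g)}}{\chi(1)^{k-1}},
\]
for any representatives $x_i\in\cC_i$.  The trivial character contributes $1$ to the sum, so it will suffice to show that the contribution of the non-trivial characters has absolute value strictly less than $1$, uniformly in $g$.

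I would bound the non-trivial contribution using two standard inputs.  First, the column orthogonality relation gives $|\chi(x)|\leq\sqrt{|C_G(x)|}=\sqrt{|G|/|x^G|}$, which is strong precisely when the classes are large.  Applying this bound to each $\chi(x_i)$ and using the trivial $|\chi(g)|\leq\chi(1)$ yields an error bound of the shape
\[
\frac{|G|^{k/2}}{\bigl(\prod_i|\cC_i|\bigr)^{1/2}}\sum_{\chi\neq1}\chi(1)^{2-k}.
\]
Second, since the hypothesis $\prod_i|\cC_i|\geq|G|^{12}$ together with $|\cC_i|<|G|$ forces $k\geq13$, the exponent $2-k$ is at most $-11$, and the Witten-type zeta sum $\sum_{\chi\neq1}\chi(1)^{-s}$ for $\SL_n(q)$ can be controlled (in fact, made arbitrarily small) once $s$ is large enough.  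Such estimates follow from lower bounds on the character degrees of $\SL_n(q)$ together with counting of characters in each degree range, both accessible from the Deligne-Lusztig/Lusztig-Srinivasan parametrization; in the large-rank regime one can also invoke the results of Liebeck-Shalev on Witten zeta functions.

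The main technical obstacle I anticipate is squeezing the constant down to precisely $12$ rather than some larger absolute constant.  The crude orthogonality estimate above is not tight: for classes $\cC_i$ concentrated near the centre, $|\chi(x_i)|$ is much smaller than $\sqrt{|G|/|\cC_i|}$ would suggest and must be replaced by a Gluck-type bound $|\chi(x)|\leq c\,\chi(1)^{\alpha}$ with $\alpha<1$; while for very large classes a few low-degree characters (notably the Steinberg and its twists) need to be treated separately and summed exactly.  I would therefore expect Rodgers and Saxl to partition the irreducible characters into a few ranges (for instance by the dimension $\chi(1)$ relative to powers of $q$), apply the orthogonality bound on the ``bulk'' range and sharper, more arithmetical bounds on the exceptional range, and then optimise over $k$.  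Maintaining a uniform bound across all $n$ and $q$, in particular across the rank-$1$ cases $\SL_2(q)$ where the number of characters is small and individual terms dominate, will be the principal difficulty.
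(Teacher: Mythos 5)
The paper does not prove Theorem~\ref{t: rs} at all: it is quoted from Rodgers and Saxl \cite{rodgers_saxl} purely as background motivation, and the body of the paper proves the generalisation (Theorem~\ref{t: aim1}) by a completely different, non-character-theoretic route (Petridis's inequality, the Product Theorem of Breuillard--Green--Tao and Pyber--Szab\'o, and the Gowers trick). So there is no in-paper proof to compare your sketch against; you would need to consult \cite{rodgers_saxl} itself.

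Evaluating the sketch on its own terms, there is a genuine gap. The Frobenius-formula setup and the bound $|\chi(x)|\le\sqrt{|C_G(x)|}$ are correct, but the crude estimate you derive from them does not close in exactly the regime where the theorem is nontrivial. Take $k=13$ with $|\cC_i|\approx |G|^{12/13}$ for each $i$; your error term becomes about $|G|^{1/2}\sum_{\chi\ne 1}\chi(1)^{-11}$. For $G=\SL_n(q)$ one has $|G|$ of order $q^{n^2-1}$, smallest nontrivial degree of order $q^{n-1}$, and roughly $q^{n-1}$ irreducible characters, so the bound is of order $q^{(n-1)(n-19)/2}$, which is $\ge 1$ once $n\ge 19$. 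Thus the orthogonality estimate fails precisely as the rank grows, and the paragraph you label an ``anticipated obstacle'' --- refined Gluck/Fulman--Guralnick-type character-ratio bounds, treating low-degree characters separately, and maintaining uniformity in $n$ and $q$ --- is not a matter of tightening a constant from, say, $20$ down to $12$; it is the entire missing content, without which the argument does not reach \emph{any} absolute constant. As written this is a correct identification of a plausible strategy, not a proof.
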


The other is a result of Gill, Pyber, Short, Szab\'o \cite{GPSS}:
\begin{theorem}\label{t: gps}
Fix a positive integer $r$. There exists a constant $c=c(r)$ such that if $G$ is a finite simple group of Lie type of rank $r$ and $S$ is a subset of $G$ of size at least two then $G$ is a product of $N$ conjugates of $S$ for some $N\leq c \log|G|/ \log |S|$.
\end{theorem}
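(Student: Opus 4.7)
The plan is to combine three ingredients: generation of finite simple groups of Lie type by a bounded number of conjugates of any nontrivial element, the Product Theorem of Pyber--Szab\'o and Breuillard--Green--Tao, and the Gowers--Nikolov--Pyber tripling lemma for quasirandom groups.

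First, I would reduce to having a generating union of conjugates of $S$. Since $|S|\geq 2$, pick distinct $s_1, s_2 \in S$, so that $x := s_1 s_2^{-1}$ is nontrivial. By a known result on generation of finite simple groups by conjugates (for groups of Lie type of rank $r$, $k = k(r)$ conjugates suffice), there exist $c_1,\ldots, c_k \in G$ such that $\langle x^{c_1}, \ldots, x^{c_k}\rangle = G$. Setting $T = \bigcup_{i=1}^k S^{c_i}$, each $x^{c_i}$ lies in the set $TT^{-1}$, so $\langle T\rangle = G$; also $|T|\geq |S|$.

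Next, I would grow $T$ by iterating the Product Theorem: either $T^3 = G$ or $|T^3|\geq |T|^{1+\epsilon(r)}$, and iterating gives $T^N = G$ for some integer $N$. The equality $T^N = G$ says that a \emph{union} of at most $k^N$ specific products of $N$ conjugates of $S$ covers $G$; to extract a \emph{single} such product, I would exploit the quasirandomness of $G$: its minimal nontrivial representation has dimension $d \geq |G|^{\epsilon(r)}$, so a result of Gowers (see also Nikolov--Pyber) gives $ABC = G$ whenever $|A||B||C| > |G|^3/d$. Hence once a particular product $P$ of conjugates of $S$ satisfies $|P|\geq |G|^{1-\epsilon(r)/3}$, just two more conjugates of $S$ suffice: $P\cdot S^g \cdot S^h = G$ for suitable $g, h$. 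Locating such a $P$ inside $T^{N_0}$ by pigeonhole over the $k^{N_0}$ terms, then appending two conjugates, yields the desired single product.

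The main technical obstacle is to obtain the stated \emph{linear} bound $N\leq c(r)\log|G|/\log|S|$, rather than the polynomial-in-$\log|G|/\log|S|$ bound that a direct iteration of the Product Theorem produces (each tripling in length buys only a $(1+\epsilon)$-factor growth in the logarithm of the set size). To achieve linearity I expect one must construct the product of conjugates one factor at a time, selecting at each step a conjugate $S^{g_{n+1}}$ for which $|P_n\cdot S^{g_{n+1}}|/|P_n|\geq |S|^{\delta(r)}$ for some fixed $\delta(r)>0$; an averaging / character-sum argument over the choice of $g_{n+1}\in G$, again using quasirandomness, should produce such a conjugate as long as $|P_n|$ is bounded away from $|G|$. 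Combining this linear growth phase with the $O_r(1)$-step quasirandomness-based finish would then give the claimed bound.
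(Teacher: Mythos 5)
Your overall architecture (get a generating configuration of conjugates, grow with the Product Theorem, finish with the Gowers trick) is in the right spirit --- this is indeed how the quoted theorem is proved in \cite{GPSS}, and how the present paper proves its generalization --- but two of your steps do not work as stated, and the second of them is precisely the heart of the matter. First, the ``pigeonhole over the $k^{N}$ terms'' extraction fails: if $T=\bigcup_{i=1}^{k}S^{c_i}$, then $T^{N}$ is a union of $k^{N}$ products of $N$ conjugates of $S$, and with $N\geq \log|G|/\log|S|$ (let alone the polynomially larger $N$ your cubing iteration actually produces) one has $k^{N}\geq |G|^{\log k/\log|S|}$, which for small $|S|$ (say $|S|=2$) exceeds $|G|$; so even $T^{N}=G$ gives no single product of conjugates of size bigger than $1$ by averaging. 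You cannot pass from a union of conjugates back to a single product this way; the argument has to track one product of conjugates throughout.

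Second, you correctly identify that the linear bound requires a one-factor-at-a-time growth step (find $g$ with $|P_n\cdot S^{g}|\geq |P_n|\,|S|^{\delta(r)}$ unless $|P_n|$ is already a fixed positive power of $|G|$), but the tool you propose for it --- averaging/character sums via quasirandomness --- cannot deliver it. For $G=G_r(q)$ one only has $\mindeg(G)\geq |G|^{1/(8r^2)}$ (Proposition~\ref{p: landseitz}), so mixing estimates of Gowers--Nikolov--Pyber type are vacuous unless $|P_n|\cdot|S|$ is already of order $|G|^{2-1/(8r^2)}$; they say nothing whatsoever when $P_n$ and $S$ are small, which is exactly the regime where the theorem is hard. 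The actual mechanism, in \cite{GPSS} and in generalized form in Lemmas~\ref{l: two sets} and~\ref{l: two sets 2} of this paper, is combinatorial: take a translate (or conjugate) $B_0$ of $S$ that generates $G$ (Corollary~\ref{c: 32}), apply the Product Theorem (Theorem~\ref{t: generating sets}) to $B_0$, and then \emph{transfer} the resulting tripling of $B_0$ to growth of the product $P_n$ by a Petridis/Pl\"unnecke--Ruzsa-type (``skew doubling'') inequality such as Theorem~\ref{t: petridis}; this yields the dichotomy ``either some conjugate of $S$ expands $P_n$ by a factor $|S|^{\delta}$, or $|P_n|$ is already $\geq |G|^{\delta'}$'', after which the Gowers trick finishes with $O_r(1)$ further conjugates. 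Without this transfer step your proof does not close: the only growth you actually establish is the direct iteration $T\mapsto T^3$, which, as you note, gives $N$ of order $(\log|G|/\log|S|)^{\log 3/\log(1+\epsilon)}$ rather than the claimed linear bound.
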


Our main result is a generalisation of Theorem~\ref{t: gps}
in the spirit to that of Rodgers and Saxl:

\begin{theorem}\label{t: aim1}
Let $G=G_r(q)$ be a finite simple group of Lie type of rank $r$. There exists $c=f(r)$ such that if $S_1,\dots, S_k$ are subsets of $G$ satisfying $\prod_{i=1}^k|S_i|\geq|G|^c$, then there exist elements $g_1,\dots, g_k$ such that $G=(S_1)^{g_1}\cdots (S_k)^{g_k}$.
\end{theorem}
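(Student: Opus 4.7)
My strategy is to reduce Theorem~\ref{t: aim1} to Theorem~\ref{t: gps}. The gap between the two statements is that Theorem~\ref{t: gps} concerns conjugates of a single set $S$, whereas here one has possibly distinct sets $S_1,\dots,S_k$. I will bridge this gap by amalgamating the $S_i$ into a bounded number of large sets and then finishing with a quasirandomness-type argument. The advantage of amalgamating before invoking Theorem~\ref{t: gps} is that any product built out of merged sets unpacks to a product of conjugates of the original $S_i$, each appearing exactly once, as required by the conclusion.

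The central technical step is a \emph{merging lemma}: for any two subsets $A, B\subseteq G$ there exists $g\in G$ such that $|A\cdot B^g|$ is close to $\min(|A||B|,|G|)$, with quantitative error depending only on $r$. To prove this I would combine a second-moment averaging argument over $g\in G$ with the Product Theorem of Pyber--Szab\'o and Breuillard--Green--Tao. The Product Theorem ensures that products grow multiplicatively unless $A$ or $B$ is concentrated in an approximate subgroup; for finite simple groups of Lie type of bounded rank, the structure theorem for approximate subgroups (Hrushovski; Breuillard--Green--Tao) shows that these exceptional cases occupy only a small region of $G$ and can be handled separately. The averaging over $g$ converts a generic-growth statement into one that holds after a judicious choice of $g$.

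Granted the merging lemma, I would apply it iteratively: at each stage, merge a pair of currently smallest sets into a single set of size close to the product of their sizes (capped at $|G|$). Tracking the multiplicative losses, after $O(k)$ merges I obtain a family $T_1,\dots,T_m$ with each $|T_j|$ reasonably large and with $\prod_j|T_j|$ still at least a polynomial (in $|G|$ and $r$) fraction of $\prod_i|S_i|$. Each $T_j$ is a product of conjugates of a subset of the original $S_i$, so a product $T_1^{h_1}\cdots T_m^{h_m}$ unpacks exactly to the form required in the theorem.

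To finish, I use a quasirandomness-type bound: for simple groups of Lie type of rank $r$ the minimal non-trivial character degree $d(G)$ is at least $|G|^{\alpha(r)}$ for some $\alpha(r)>0$, so any three subsets whose size product exceeds $|G|^3/d(G)$ have full triple product (Gowers). Applying this after a final merging step (if needed) to reduce to three sets of size close to $|G|$ yields $T_1^{h_1}\cdots T_m^{h_m}=G$ for suitable $h_i$. The main obstacle, I expect, is the merging lemma, which requires uniform quantitative control over all pairs $A, B$; the exceptional structured cases, where $A$ or $B$ is close to an approximate subgroup, are where the full force of the classification of approximate subgroups in Lie-type groups of bounded rank is needed, and controlling the losses there is what ultimately determines the constant $c=f(r)$.
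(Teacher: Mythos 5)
Your overall scheme---iteratively merge the $S_i$ into a few large pieces by choosing good conjugates, then finish with Gowers' quasirandomness bound---is the right shape, and it is indeed the shape of the paper's argument. But the central technical claim you rely on, the ``merging lemma'' asserting that for any $A,B\subseteq G$ there is $g$ with $|A\cdot B^g|$ close to $\min(|A||B|,|G|)$ with losses controlled by $r$ alone, is much stronger than what the tools you cite deliver, and is not what the paper proves. The Product Theorem (Theorem~\ref{t: generating sets}) gives tripling growth $|S^3|\geq|S|^{1+\eta}$ for a \emph{generating} set; combined with Petridis's theorem (Theorem~\ref{t: petridis}) this can be converted into a statement that, after a good choice of conjugate, $|A\cdot B^g|$ exceeds $|A|$ by a small \emph{power} of $|B|$ (this is Lemma~\ref{l: two sets}), which is vastly weaker than near-multiplicativity. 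The approximate-subgroup structure theory you invoke classifies sets of small tripling; it does not say that outside a ``small region'' of $G$ one gets lossless growth of a product of two different sets after conjugation, and I do not see how to extract your claim from it. Without a genuine proof of your merging lemma the rest of the argument does not close.

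What the paper does instead is split into two merging regimes, which is exactly the structure your single lemma tries (and fails) to collapse. For very small sets, $|A|,|B|<(\minclass(G))^{1/4}$, Proposition~\ref{p: small sets} really does give \emph{exact} lossless merging $|A\cdot B^g|=|A||B|$, but the proof is a bespoke second-moment/averaging argument over conjugates of $B$ (Lemma~\ref{l: B}) combined with a direct counting injection (Lemma~\ref{l: base subset prelim}); it has nothing to do with the Product Theorem or approximate subgroups, and it only works in this tiny range. For medium-sized sets one only gets the weak power growth of Lemma~\ref{l: two sets 2}, and the heart of the paper is the bookkeeping in Lemma~\ref{l: medium sets}: it iterates this weak growth, pairs sets up, and tracks how the exponent $\kappa$ (of the smallest set as a power of $|G|$) increases by a factor $(1+\eta)$ at each round while the number of sets shrinks by at most a factor of $3$, so that starting from $|G|^c$ with $c\approx 3^{I+1}$ for $I\approx\log(\delta/\zeta)/\log(1+\eta)$ guarantees three sets of size at least $|G|^{1-1/(24r^2)}$ survive for the Gowers step. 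Your proposal waves away precisely this bookkeeping (``$O(k)$ merges'', ``reasonably large''), but the quantitative interaction between the weak growth exponent $\eta$, the loss of sets per round, and the Gowers threshold is what makes $c=f(r)$ exist. Finally, you open by saying you will reduce to Theorem~\ref{t: gps}, but the proposal never actually invokes it; the real reduction is to Gowers' trick directly, which both you and the paper use in the endgame.
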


Theorem~\ref{t: aim1} differs to that of Rodgers and Saxl in three important respects, two good, one not so good: First, our result pertains to all finite simple groups $G$ of Lie type. Second, our result does not just pertain to conjugacy classes, but to subsets of the group, provided we are free to take conjugates.

The third difference is a weak point: our result replaces the constant ``12''  in Theorem~\ref{t: rs} with an unspecified constant that depends on the rank of the group $G$. We conjecture that we should be able to do better, and not just for finite simple groups of Lie type, but for alternating groups as well:

\begin{conjecture}\label{c: ideal}
Let $G$ be a non-abelian finite simple group. There exists $c$ such that if $S_1,\dots, S_k$ are  subsets of $G$ satisfying $\prod_{i=1}^k|S_i|\geq|G|^c$, then there exist elements $g_1,\dots, g_k$ such that $G=(S_1)^{g_1}\cdots (S_k)^{g_k}$.
\end{conjecture}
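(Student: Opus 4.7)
The plan is to split the non-abelian finite simple groups into three families and treat each separately: (a) Lie-type groups of rank bounded by some fixed $r_0$; (b) classical groups of rank exceeding $r_0$; and (c) alternating groups $A_n$. Family (a) reduces immediately to Theorem~\ref{t: aim1}, since one may take $c_0 = \max_{r \leq r_0} f(r)$; so the substantive content lies in families (b) and (c), where one must produce a uniform constant $c$ independent of the rank $r$ or degree $n$.

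For family (b), I would invoke Theorem~\ref{t: rs} of Rodgers and Saxl together with its expected analogues for unitary, symplectic and orthogonal groups of large rank (in the spirit of Liebeck--Shalev's work on width and covering numbers). The main challenge is to pass from arbitrary subsets $S_i$ to conjugacy classes. Since the number of conjugacy classes $k(G)$ in a classical group of rank $r$ over $\Fq$ grows only polynomially in $q^r$ and hence lies below $|G|^{o(1)}$, a pigeonhole argument produces a class $\cC_i$ with $|S_i \cap \cC_i| \geq |S_i|/k(G)$. The key step is then to show that some conjugate $(S_i)^{g_i}$ essentially contains a fixed very large portion of $\cC_i$; a character-theoretic argument in the style of Gowers' quasirandomness, sharpened using $\mindeg(G)$ and $\minclass(G)$, is the natural tool. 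Feeding the resulting class data into the Rodgers--Saxl-type covering theorem then yields the conclusion with a controlled loss in the exponent.

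For family (c), the alternating groups, I would exploit the fact that conjugacy classes of $A_n$ are parametrised by (almost all) partitions of $n$, of which there are only $p(n) = e^{O(\sqrt{n})}$. A pigeonhole argument as above produces, for each $S_i$, a conjugacy class $\cC_i$ with $|S_i \cap \cC_i| \geq |S_i|/p(n)$, and this loss is negligible against $|A_n|^\epsilon$. The plan is then to invoke results of Larsen--Shalev and Larsen--Shalev--Tiep on covering numbers in $A_n$: sufficiently generic classes $\cC$ satisfy $\cC^N = A_n$ for some absolute $N$. Coupled with a Gowers--Nikolov--Pyber style iteration that converts the intersection data into approximate containment inside bounded products of conjugates of $S_i$, this should close out family~(c).

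The main obstacle is the transition from \emph{intersection} with a conjugacy class to \emph{containment} inside a single conjugate of $S_i$. Quasirandomness-style tools comfortably produce containment inside a \emph{product of two or three} conjugates of $S_i$, but Conjecture~\ref{c: ideal} allows only a single conjugate of each $S_i$ in the final product. Overcoming this likely requires a genuinely new idea --- perhaps an amplification trick that absorbs extra conjugate factors into neighbouring terms of the cyclic product, or a refined character-sum estimate tailored to classical and alternating groups. Without such an idea, one obtains only the weaker statement in which each $S_i$ is replaced by a bounded number of its conjugates, which is essentially the content of Theorems~\ref{t: gps} and~\ref{t: aim1}.
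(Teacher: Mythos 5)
The statement you were asked to prove is not a theorem of the paper; it is Conjecture~\ref{c: ideal}, which the authors explicitly present as open. They write that it ``seems out of reach at the moment'' and observe that it is a significant generalization of the Product Decomposition Conjecture of Liebeck, Nikolov and Shalev, itself unproven. The paper proves only the bounded-rank case, Theorem~\ref{t: aim1}, with a constant $c=f(r)$ that depends on the rank. So there is no proof in the paper to compare your attempt against, and a correct ``proof'' here should either establish genuinely new mathematics or, more realistically, explain why the statement is currently out of reach.

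Your proposal is a strategy sketch rather than a proof, and to your credit you say so: the final paragraph concedes that without a new amplification idea you only recover ``the weaker statement in which each $S_i$ is replaced by a bounded number of its conjugates,'' which is Theorems~\ref{t: gps} and~\ref{t: aim1}. That concession is the correct diagnosis. Let me flag two additional concrete gaps in the intermediate steps. First, the pigeonhole step gives a class $\cC_i$ with $|S_i\cap\cC_i|\geq|S_i|/k(G)$, but this controls the intersection relative to $|S_i|$, not relative to $|\cC_i|$; when $|S_i|$ is far smaller than $|\cC_i|$, the set $S_i\cap\cC_i$ is a negligible sliver of the class, and no conjugate of $S_i$ can ``essentially contain a fixed very large portion of $\cC_i$.'' Rodgers--Saxl and its Liebeck--Shalev-style relatives require honest conjugacy classes as inputs, and there is no mechanism in your sketch to manufacture them from thin subsets. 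Second, even granting the class data, the quasirandomness/Gowers machinery naturally yields containment inside a product of two or three conjugates of each $S_i$, whereas the conjecture allows exactly one conjugate per factor; this is precisely the obstruction you name, and it is the crux. Absent a new idea here, the attempt does not go beyond what the paper already proves in the bounded-rank case, and it does not address the unbounded-rank and alternating cases beyond wishful references to expected analogues. In short: this is a reasonable survey of plausible tools, but it is not a proof, and the paper itself offers none because the statement is conjectural.
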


Conjecture~\ref{c: ideal} seems out of reach at the moment. Indeed, it is a significant generalization of a conjecture that already exists in the literature -- the Product Decomposition Conjecture of Liebeck, Nikolov and Shalev \cite{lns2} -- and which already appears to be very challenging.

In light of the undoubted difficulty of proving Conjecture~\ref{c: ideal}, we propose a second, weaker conjecture. A proof of this conjecture, as well as being of interest in its own right, would represent a significant staging post in the pursuit of a proof of Conjecture~\ref{c: ideal}.

\begin{conjecture}\label{c: aim2}
Let $G$ be a non-abelian finite simple group. There exists $c$ such that if $S_1,\dots, S_k$ are normal subsets of $G$ satisfying $\prod_{i=1}^k|S_i|\geq|G|^c$, then $G=S_1\cdots S_k$.
\end{conjecture}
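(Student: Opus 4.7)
The observation to make first is that for finite simple groups $G$ of Lie type of bounded rank, Conjecture~\ref{c: aim2} already follows from Theorem~\ref{t: aim1}: conjugating a normal subset returns the same subset, so one may take all $g_i=1$ in the conclusion of Theorem~\ref{t: aim1}. This reduces Conjecture~\ref{c: aim2} to two remaining infinite families, the alternating groups $A_n$ and the classical groups whose Lie rank grows with $|G|$.

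For these remaining cases the plan is to pursue a character-theoretic approach. For $g\in G$ and normal subsets $S_1,\dots,S_k$, the number $N(g)$ of ordered tuples $(x_1,\dots,x_k)\in S_1\times\cdots\times S_k$ with $x_1\cdots x_k=g$ is given exactly by
\[
N(g) \;=\; \frac{1}{|G|}\sum_\chi \frac{\sigma_\chi(S_1)\cdots\sigma_\chi(S_k)\,\overline{\chi(g)}}{\chi(1)^{k-1}}, \qquad \sigma_\chi(S):=\sum_{s\in S}\chi(s),
\]
where the sum runs over the irreducible characters of $G$. The trivial character contributes $|S_1|\cdots|S_k|/|G|$, and it suffices to show that this strictly dominates the remainder for every $g\in G$.

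The two principal ingredients would be: (i) Witten zeta bounds in the style of Liebeck--Shalev, which give that $\sum_{\chi\neq 1}\chi(1)^{-s}$ is bounded (and tends to $0$) once $s$ exceeds a threshold $s_0=s_0(G)$ depending only on the family; and (ii) uniform pointwise character value estimates $|\chi(g)|\leq\chi(1)^\alpha$ in the tradition of Fomin--Lulov, Larsen--Shalev, and Bezrukavnikov--Liebeck--Shalev--Tiep. Since a normal subset is a union of conjugacy classes, any such bound yields $|\sigma_\chi(S)|\leq|S|\chi(1)^\alpha$ (after separating out the class $\{1\}$), and a short calculation reduces positivity of $N(g)$ to an inequality of the form $(k+1)\alpha<(k-1)-s_0$.

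The hard part, as is usual for such questions, is finding a single exponent $\alpha$ that is both uniform over all non-identity $g\in G$ and small enough to yield this inequality. For elements of small support (short cycles in $A_n$, low-codimension elements in classical groups) the best available character bounds give $\alpha$ close to $1$, which only suffices when $k$ is large relative to $s_0$. A complete proof would therefore split into a small-support regime, treated by a more delicate argument -- for instance by showing that any sufficiently large normal subset contains a class of large support, or by appealing to the Parseval identity $\sum_\chi|\sigma_\chi(S)|^2=|S||G|$ in place of pointwise character bounds -- and a small-$k$ regime, where a direct Cauchy--Schwarz estimate in the spirit of the $k=2$ case already delivers the conclusion with $c=2$.
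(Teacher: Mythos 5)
The statement you are addressing is Conjecture~\ref{c: aim2}, which the paper explicitly leaves open; there is no proof in the paper to compare against, and indeed the authors remark that even the weaker Product Decomposition Conjecture of Liebeck, Nikolov and Shalev ``already appears to be very challenging.'' Your opening reduction is correct and worth stating: for a normal subset $S_i$ one has $S_i^{g_i}=S_i$ for every $g_i$, so Theorem~\ref{t: aim1} immediately yields Conjecture~\ref{c: aim2} for simple groups of Lie type of bounded rank, leaving alternating groups and classical groups of unbounded rank. Your Frobenius-type formula for $N(g)$ is also correct (it is the class-sum formula summed over the classes making up each $S_i$), and the identity $\sum_\chi|\sigma_\chi(S)|^2=|S||G|$ is the right Parseval statement for the indicator of a normal subset.

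That said, what you have written is a research program, not a proof, and you say as much yourself. The genuine gap is exactly the one you flag: no uniform exponent $\alpha<1$ in a bound $|\chi(g)|\leq\chi(1)^{\alpha}$ is available for \emph{all} nontrivial $g$ and \emph{all} $\chi$, because elements of small support (e.g.\ transpositions in $A_n$, or near-central elements in classical groups) have character ratios close to $1$; the Larsen--Shalev and Bezrukavnikov--Liebeck--Shalev--Tiep bounds that make this approach work for products of a \emph{fixed} class all hinge on controlling support, and a large normal subset can in principle be concentrated on classes of small support. Handling this would require a lemma of the kind you gesture at (``any sufficiently large normal subset contains a class of large support''), together with a separate argument for the small-support contribution and for small $k$; none of these are carried out, and without them the inequality $(k+1)\alpha<(k-1)-s_0$ cannot be closed. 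This is precisely the kind of difficulty that makes Conjecture~\ref{c: aim2} a conjecture rather than a theorem, and it extends the Liebeck--Shalev result (single class/normal subset) in a way that their zeta-function method does not automatically cover, since the $S_i$ here may be unrelated unions of classes of wildly different sizes and supports.
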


Note that $S$ is a \emph{normal} subset of the group $G$ if it is invariant under conjugation by elements of $G$; in other words, $S$ is a union of conjugacy classes of $G$. Conjecture~\ref{c: aim2} is a generalization of an important theorem of Liebeck and Shalev \cite{lieshal}.

\subsection{Structure of the paper}
In \S\ref{s: background} we give the necessary background results used to prove Theorem~\ref{t: aim1}. In \S\S\ref{s: medium sets}, \ref{s: large sets} and \ref{s: small sets}, we give partial results towards a proof of Theorem~\ref{t: aim1}, depending on the size of the sets $S_1,\dots, S_k$: we use different techniques if these sets are ``small'', ``medium-sized'' or ``large''. Finally, in \S\ref{s: proof} we prove Theorem~\ref{t: aim1}.

\subsection{Acknowledgement}
\label{sec:acknowledgement}
N.~Gill was supported by EPSRC grant EP/N010957/1. L.~Pyber and
E. Szab\'o were supported by the National Research, Development and
Innovation Office (NKFIH) Grant K115799. E.~Szab\'o was also supported
by the NKFIH Grant K120697. The project leading to this application
has received funding from the
European Research Council (ERC) under the European Union's Horizon
2020 research and innovation programme (grant agreement No 741420).

\section{Necessary background}\label{s: background}

We will use a theorem of Petridis \cite[Theorem~1.7]{petridis}:

\begin{theorem}\label{t: petridis}
 Let $A$ and $B$ be finite sets in a group $G$. Suppose that
 \begin{enumerate}
  \item $|AB|\leq \alpha|A|$.
  \item $|AbB|\leq \beta|A|$ for all $b\in B$.
  \item $|A|\leq \gamma|B|.$
 \end{enumerate}
Then there exists $S\subseteq A$ such that for all $h>1$,
\[
 |SB^h|\leq \alpha^{8h-9}\beta^{h-1}\gamma^{4h-5}|S|.
\]
where $B^h$ denotes the product of $h$ copies of $B$.
\end{theorem}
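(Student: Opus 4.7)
The plan is to use Petridis's ``minimal ratio'' technique, adapted to the non-commutative setting. First I would set
\[
K \;=\; \min\bigl\{ |XB|/|X| \,:\, \emptyset \ne X \subseteq A \bigr\},
\]
and choose $S \subseteq A$ attaining this minimum, so that $|SB| = K|S|$ with $K \leq \alpha$, and moreover every nonempty subset $S' \subseteq S$ satisfies $|S'B| \geq K|S'|$. This extremal property of $S$ is what keeps the subsequent covering arguments tight, and is the reason why the conclusion of the theorem concerns a carefully chosen subset $S$ rather than $A$ itself.

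I would then proceed by induction on $h$, noting that the exponents $(8h-9,\,h-1,\,4h-5)$ increase by $(8,1,4)$ each time $h$ grows by one. So the strategy decomposes naturally into a base case at $h=2$, in which one shows $|SB^2| \leq \alpha^7\beta\gamma^3 |S|$ directly via a Ruzsa-style covering of $SB$ together with the triple-product hypothesis $|AbB|\leq\beta|A|$ to handle the middle of the product, plus an inductive step
\[
|SB^{h+1}| \;\leq\; \alpha^{8}\,\beta\,\gamma^{4}\, |SB^h|,
\]
which when iterated from $h=2$ reproduces the advertised bound. For the inductive step I would write $SB^{h+1} = (SB^h)\cdot B$ and cover $SB^h$ by a small number of left translates of a short word in $A,A^{-1},B,B^{-1}$, using the extremal property of $S$ to keep the number of translates under control. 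Each such covering step produces a power of $\alpha$; the hypothesis $|AbB|\leq\beta|A|$ supplies a single factor of $\beta$ once one sums over $b\in B$ inside the middle product; and the ratio assumption $|A|\leq\gamma|B|$ is what allows a Ruzsa-triangle-type step to cost only a power of $\gamma$ rather than another power of $\alpha$.

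The main obstacle I foresee is the non-commutativity. In the abelian setting the extremal $S$ satisfies the clean estimate $|SBZ|\leq K|SZ|$ for every finite $Z$, which immediately yields $|SB^h|\leq K^{h-1}|SB|$ and makes induction trivial. This identity fails in general groups, and the role of the auxiliary hypothesis $|AbB|\leq\beta|A|$ is precisely to patch up the failure at the cost of a single extra factor of $\beta$ per step, while the $\gamma$-hypothesis pays for the additional Ruzsa-triangle overhead. Tracking exactly eight $\alpha$'s and four $\gamma$'s per induction step will require iterating the Ruzsa triangle inequality a fixed number of times inside each step, and this bookkeeping is where I expect essentially all of the technical work to live; once the single inductive inequality is established, the theorem follows by a routine multiplication of the exponents.
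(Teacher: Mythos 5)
The paper does not prove this statement at all: it is quoted verbatim as \cite[Theorem~1.7]{petridis} and used as a black box. So there is no ``paper's own proof'' to compare against; the question is whether your sketch is a viable reconstruction of Petridis's argument.

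Your opening move is certainly the right one, and it is the heart of Petridis's method: take $S\subseteq A$ minimizing the ratio $|XB|/|X|$, so $|SB|=K|S|$ with $K\leq\alpha$ and $|S'B|\geq K|S'|$ for every nonempty $S'\subseteq S$. The exponent bookkeeping also checks out: a base case $|SB^2|\leq\alpha^7\beta\gamma^3|S|$ together with a step $|SB^{h+1}|\leq\alpha^8\beta\gamma^4|SB^h|$ does reproduce $\alpha^{8h-9}\beta^{h-1}\gamma^{4h-5}$. The gap is that the entire content of the theorem is concentrated in precisely the step you defer. You describe it only as ``cover $SB^h$ by a small number of left translates of a short word in $A,A^{-1},B,B^{-1}$,'' and you acknowledge that this is ``where essentially all of the technical work lives.'' As it stands there is no argument there, and it is not clear the step can be carried out in the form you assert. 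The noncommutative Petridis lemma that the minimal-ratio $S$ gives you is $|CSB|\leq K|CS|$ for every finite $C$ --- with $C$ multiplying on the \emph{left} of $S$. That does not directly let you peel a $B$ off the right of $SB^h$ and pay a bounded cost; $SB^{h}$ is not of the form $CS$ for any useful $C$. Petridis's actual proof instead controls symmetrized or conjugated expressions (sets of the shape $B^{-1}\cdots B^{-1}SB\cdots B$) via the key lemma, and then converts to a bound on $|SB^h|$ using several Ruzsa triangle inequalities together with all three hypotheses; the clean single inequality $|SB^{h+1}|\leq\alpha^8\beta\gamma^4|SB^h|$ is not obviously what one proves along the way. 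Until you exhibit a concrete injection or covering argument establishing your inductive inequality (or whatever replaces it), the proposal is a plan with the decisive step missing, not a proof.
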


From here on $G$ is a finite group. Let $\minclass(G)$ denote the size of the smallest nontrivial conjugacy class in $G$, and let $\mindeg(G)$ denote the dimension of the smallest nontrivial complex irreducible representation of $G$.

As observed in \cite{npy}, a result of Gowers \cite{gowers} implies the following.

\begin{prop}\label{p: gowers}
Let $G$ be a finite group and let $k=\mindeg(G)$. Take $A,B,C\subseteq G$ such that
$|A|\cdot |B|\cdot |C| \geq \frac{|G|^3}{k}.$ Then $G=ABC$.
\end{prop}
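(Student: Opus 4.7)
The plan is to follow Gowers' non-abelian Fourier-analytic argument for quasirandom groups, as used in \cite{gowers,npy}. Set
\[
N(x) := \#\{(a,b,c) \in A \times B \times C : abc = x\},
\]
so that $G=ABC$ is equivalent to showing $N(x) \geq 1$ for every $x \in G$. Since $N = 1_A * 1_B * 1_C$ under convolution on $G$, Fourier inversion yields
\[
N(x) = \frac{1}{|G|} \sum_{\rho \in \widehat{G}} d_\rho \operatorname{tr}\bigl(\hat{1}_A(\rho)\, \hat{1}_B(\rho)\, \hat{1}_C(\rho)\, \rho(x^{-1})\bigr),
\]
where $\widehat{G}$ denotes the set of irreducible complex representations of $G$, $d_\rho = \dim \rho$, and $\hat{f}(\rho) = \sum_y f(y)\rho(y)$. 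The trivial representation contributes exactly the main term $|A||B||C|/|G|$, so the task is to bound the remaining sum in absolute value by this quantity.

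For the key estimate, I would use $\|\rho(x^{-1})\|_{\mathrm{op}} = 1$ (since $\rho$ may be taken unitary) together with the standard matrix inequalities $|\operatorname{tr}(MN)| \leq \|M\|_{HS}\|N\|_{HS}$ and $\|MN\|_{HS} \leq \|M\|_{\mathrm{op}}\|N\|_{HS}$ to obtain
\[
\bigl|\operatorname{tr}\bigl(\hat{1}_A(\rho)\hat{1}_B(\rho)\hat{1}_C(\rho)\rho(x^{-1})\bigr)\bigr| \leq \|\hat{1}_A(\rho)\|_{HS}\|\hat{1}_B(\rho)\|_{HS}\|\hat{1}_C(\rho)\|_{HS}.
\]
I would then apply Cauchy--Schwarz over $\rho$ to the first two factors and invoke the Plancherel identity $\sum_{\rho} d_\rho \|\hat{1}_S(\rho)\|_{HS}^2 = |G|\cdot|S|$ to control them, while handling the third factor uniformly via the quasirandomness hypothesis $d_\rho \geq k$ for every nontrivial $\rho$, which forces $\|\hat{1}_C(\rho)\|_{HS}^2 \leq |G||C|/k$.

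Assembled together, these ingredients should produce a Gowers-type mixing bound of the form
\[
\left| N(x) - \frac{|A||B||C|}{|G|} \right| \leq \sqrt{\frac{|A||B||C||G|}{k}},
\]
and the hypothesis $|A||B||C|\geq |G|^3/k$ is exactly what forces the right-hand side to be at most the main term $|A||B||C|/|G|$, so $N(x) \geq 0$, and in fact $N(x) \geq 1$ by integrality together with the observation that the Cauchy--Schwarz and operator-norm inequalities are essentially never simultaneously sharp (the borderline case of equality in the hypothesis can be ruled out by a trivial perturbation argument). I do not anticipate a serious obstacle here; the entire argument is a clean Fourier computation, and the only real care required is in pairing the three Hilbert--Schmidt factors so that the quasirandomness bound $d_\rho \geq k$ is applied to exactly one of them.
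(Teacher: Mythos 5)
The paper does not prove this proposition; it simply cites it as a known consequence of Gowers' work, observed in \cite{npy}. Your reconstruction follows exactly the standard nonabelian Fourier argument that underlies those references, and the bookkeeping is correct up through the mixing estimate
\[
\left| N(x) - \frac{|A||B||C|}{|G|} \right| \;\leq\; \sqrt{\frac{|A||B||C|\,|G|}{k}},
\]
obtained by bounding one Hilbert--Schmidt factor uniformly via $d_\rho\geq k$ and treating the other two by Cauchy--Schwarz against Plancherel. This is the intended proof.

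The only place you should tighten things is the endgame. From the displayed inequality together with $|A||B||C|\geq |G|^3/k$ you obtain $N(x)\geq 0$, and as you note, integrality alone does \emph{not} upgrade $N(x)\geq 0$ to $N(x)\geq 1$: the value $0$ is an integer. Your appeal to ``Cauchy--Schwarz and operator-norm inequalities are essentially never simultaneously sharp'' and a ``trivial perturbation argument'' is a gesture rather than a proof, and there is nothing to perturb when $|A||B||C|=|G|^3/k$ holds exactly. The clean fix is already implicit in the Plancherel step you used: separating out the trivial representation gives the sharper identity
\[
\sum_{\rho\neq 1} d_\rho\,\|\hat{1}_S(\rho)\|_{HS}^2 \;=\; |S|\bigl(|G|-|S|\bigr) \;<\; |S|\,|G|
\]
for any proper subset $S\subsetneq G$. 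Feeding this into your three estimates (the uniform bound on $\|\hat{1}_C(\rho)\|_{HS}$ and the two Cauchy--Schwarz factors) shows the error term is \emph{strictly} less than $\sqrt{|A||B||C|\,|G|/k}$ whenever $A,B,C$ are proper and nonempty, hence strictly less than $|A||B||C|/|G|$ under the hypothesis. This forces $N(x)>0$, and only then does integrality give $N(x)\geq 1$. The remaining cases are trivial: if one of $A,B,C$ equals $G$ then $ABC=G$ immediately, and the hypothesis rules out any of them being empty. With that replacement, your argument is complete.
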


The following two results give useful facts about simple groups of Lie type. Note that, if we write $G_r(q)$ for a simple group of Lie type of rank $r$ over $\Fq$, then there are multiple conventions for the definition of $r$ and the definition of $q$. We have stated the following results very conservatively -- they are valid for whichever standard definition of these two parameters one cares to take (and this also explains the difference in the statement of the first, from that which appears in \cite{GPSS}).

The first result follows for the classical groups from \cite[Table 5.2.A]{kl} for the classical results (taking into account the corrections listed in \cite{mv}), and for the exceptional groups from \cite{v1, v2, v3}.

The second result is proved using the lower bounds on projective representations given by Landazuri and Seitz \cite{landseitz} (taking into account the corrections listed in
\cite[Table 5.3.A]{kl}).

\begin{prop}\label{p: conjugacy size}
Let $G=G_r(q)$ be a simple group of Lie type of rank $r$ over $\Fq$, the finite field of order $q$. We have $q^{r/2}\leq \minclass(G)<|G|\leq q^{8r^2}$.
\end{prop}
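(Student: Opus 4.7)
The plan is to establish the three parts of the chain separately. The middle inequality $\minclass(G) < |G|$ is immediate. For the upper bound $|G| \leq q^{8r^2}$, I would appeal to the standard order formulas, which express $|G|$ (up to small centre factors and twists) as $q^{\dim G}\prod_{i}(q^{d_i}-\epsilon_i)$, where $\dim G$ is the dimension of the underlying algebraic group and the $d_i$ sum to $\dim G$. One has $\dim G \leq 2r^2+r$ for the classical families and $\dim G \leq 248$ for the exceptionals (attained by $E_8$), both comfortably less than $8r^2$. A crude estimate $\prod_i(q^{d_i}-\epsilon_i) < q^{2\dim G}$ gives $|G| < q^{3\dim G} \leq q^{8r^2}$, and this generous slack is precisely what absorbs the various conventions on $q$ that arise for the twisted groups.

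For the lower bound $q^{r/2} \leq \minclass(G)$, the strategy is to show that every nontrivial $g\in G$ has centralizer of index at least $q^{r/2}$. Using Jordan decomposition, write $g=su=us$ with $s$ semisimple and $u$ unipotent. If $s\neq 1$, then $C_G(g)\leq C_G(s)$ and $C_G(s)$ sits inside a proper (twisted) Levi subgroup, so its index in $G$ is at least a power $q^a$ whose exponent $a$ grows linearly in $r$ and in particular exceeds $r/2$ with ample room. If $s=1$ and $g=u$ is a nontrivial unipotent element, the smallest conjugacy class is realised by a long root element, whose orbit has dimension comparable to $r$: for instance $2n-3$ for a transvection in $\SL_n(q)$, with analogous linear-in-$r$ lower bounds in every other classical type, and the corresponding values from Bala--Carter theory for the exceptionals.

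The actual verification is then a finite case-check that I would carry out using Table~5.2.A of \cite{kl} (with the corrections of \cite{mv}) for the classical families and the papers of Vasilev \cite{v1,v2,v3} for the exceptional families; both sources catalogue the centralizer orders of every unipotent and semisimple class, so one needs only read off the minimum centralizer index and confirm it exceeds $q^{r/2}$. The main obstacle, and the reason for the authors' careful remark about conventions, is purely bookkeeping: one must verify the inequality under all standard choices of $r$ and $q$ for the twisted families ${}^2\!A_r$, ${}^2\!D_r$, ${}^3\!D_4$, ${}^2\!B_2$, ${}^2\!G_2$, ${}^2\!F_4$, ${}^2\!E_6$, where the rank is variously defined relative to the ambient untwisted group and the base field is $q$, $q^2$ or $q^3$ depending on the author. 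The slackness built into the stated bounds $q^{r/2}$ and $q^{8r^2}$ is exactly what makes this reconciliation painless and allows a single clean statement.
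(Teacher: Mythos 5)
Your overall strategy matches the paper's: the paper offers no detailed argument, simply citing Table~5.2.A of \cite{kl} (with the corrections in \cite{mv}) for the classical groups and the Vasilev papers \cite{v1,v2,v3} for the exceptionals, which is exactly what you appeal to for the lower bound on $\minclass(G)$. Your Jordan-decomposition scaffolding is a reasonable way to organise that case-check, and your awareness of the convention issues for the twisted families is on point.

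However, there is a concrete error in your upper-bound argument. You write the order as $q^{\dim G}\prod_i(q^{d_i}-\epsilon_i)$ with $\sum_i d_i = \dim G$; in fact the standard formula is $|G| = \tfrac{1}{d}\,q^{N}\prod_i(q^{d_i}-\epsilon_i)$ where $N$ is the number of \emph{positive roots} and the fundamental degrees satisfy $\sum_i d_i = N+r$, so that $\dim G = 2N+r$. Your misremembered formula, fed through your crude estimate $\prod_i(q^{d_i}-\epsilon_i) < q^{2\dim G}$, yields $|G|<q^{3\dim G}$, and the claimed inequality $3\dim G \leq 8r^2$ is \emph{false} in several cases: for type $A_1$ one has $3\cdot 3 = 9 > 8$; for $G_2$, $3\cdot 14 = 42 > 32$; for $F_4$, $3\cdot 52 = 156 > 128$; for $E_8$, $3\cdot 248 = 744 > 512$. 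So the argument as written does not prove the upper bound. The fix is to use the correct order formula, which gives $|G| < q^{N}\cdot q^{N+r} = q^{\dim G}$ (up to a harmless factor absorbed by the slack), and then $\dim G \leq 8r^2$ holds easily in every type and convention; alternatively one can simply cite the order formulas directly, as the paper does.
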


\begin{prop}\label{p: landseitz}
Let $G=G_r(q)$ be a simple group of Lie type of rank $r$ over $\Fq$, the finite field of order $q$. Let $k=\mindeg(G)$. Then $|G|<k^{8r^2}$.
\end{prop}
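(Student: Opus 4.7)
The plan is to combine Proposition~\ref{p: conjugacy size}, which gives $|G|\leq q^{8r^2}$, with the Landazuri--Seitz lower bounds on $k=\mindeg(G)$. Taking logarithms, the target inequality $|G|<k^{8r^2}$ is equivalent to $\log k > \log|G|/(8r^2)$, and Proposition~\ref{p: conjugacy size} reduces this to showing $k\geq q$; the strict inequality $|G|<k^{8r^2}$ will then follow from the large amount of slack in Proposition~\ref{p: conjugacy size}, since $|G|$ is in fact a polynomial in $q$ of degree much smaller than $8r^2$.

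For each family of simple groups of Lie type, the Landazuri--Seitz theorem \cite{landseitz}, corrected as in \cite[Table~5.3.A]{kl}, provides an explicit polynomial lower bound for $k$ in terms of $q$. For classical groups of rank $r\geq 2$ this bound is of order $q^{r-1}$, and for exceptional groups one obtains an analogous bound $k\geq q^{c(r)}$ with $c(r)\geq 1$. In each of these cases the bound $k\geq q$ is immediate, and the claim follows.

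The main obstacle is the handful of rank-one groups, namely $\PSL_2(q)$, ${}^2B_2(q)$, and ${}^2G_2(q)$, where the Landazuri--Seitz bound on $k$ can be sublinear in $q$ (for example $k\geq (q-1)/\gcd(2,q-1)$ for $\PSL_2(q)$). For these families, however, $|G|$ is itself a polynomial in $q$ of degree at most $7$, and with $r=1$ the target is merely $|G|<k^8$; a short family-by-family elementary calculation establishes this. The finitely many small values of $q$ for which the generic Landazuri--Seitz bound requires correction are then handled by inspecting the refined values collected in \cite[Table~5.3.A]{kl}.
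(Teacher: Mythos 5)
Your proof follows essentially the same route as the paper, which dispatches this result in a single sentence by citing the Landazuri--Seitz lower bounds (with the Kleidman--Liebeck corrections). Your additional observation that Proposition~\ref{p: conjugacy size} reduces the claim to $k\geq q$ in rank at least $2$, together with a direct check for the rank-one families and the small-$q$ exceptions, is a mild and correct streamlining of the same citation-based argument.
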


Note that Propositions~\ref{p: gowers} and \ref{p: landseitz} imply that if $A,B,C$ are subsets of $G=G_r(q)$ with $|A|,|B|,|C|>|G|^{1-\frac{1}{24r^2}}$, then $G=ABC$.

The next result was obtained independently in \cite{guralnick-kantor} and \cite{stein}. The subsequent corollary is an easy consequence, and can be found in \cite{GPSS}. Note that, by the {\it translate} of a set $S$ in a group $G$, we mean a set of form $Sg:=\{sg \mid s\in S\}$ where $g$ is some element of $G$.

\begin{prop}\label{p: stein}
Each finite simple group $G$ is $\frac32$-generated; that is, for any nontrivial element $g$ of $G$ there exists $h$ in $G$ such that $\langle g, h\rangle=G$.
\end{prop}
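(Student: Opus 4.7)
The plan is to invoke the Classification of Finite Simple Groups and to treat each family separately, with a single underlying probabilistic/counting observation in common. For a fixed nontrivial $g\in G$, an element $h\in G$ fails to generate $G$ together with $g$ exactly when the pair lies in some common maximal subgroup, so
\[
\{h\in G:\langle g,h\rangle\neq G\}\;\subseteq\;\bigcup_{M\text{ maximal},\,M\ni g} M.
\]
It is therefore enough to prove $\sum_{M\text{ max},\,M\ni g}|M|<|G|$. Grouping by conjugacy classes $[M_0]$ of maximal subgroups, this rewrites as
\[
\sum_{[M_0]}\frac{|g^G\cap M_0|}{|g^G|}\cdot[G:N_G(M_0)]\cdot|M_0|\;<\;|G|,
\]
where $|g^G\cap M_0|/|g^G|$ is the fixed-point ratio of $g$ acting on the coset space $G/N_G(M_0)$ and is the quantity about which a great deal is known.

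For the sporadic simple groups the inequality can be checked directly from \textsc{Atlas} data, a finite calculation. For the alternating group $A_n$ I would argue combinatorially, producing $h$ by hand so that $\langle g,h\rangle$ is primitive on $\{1,\dots,n\}$ and contains an element of small support, whereupon a Jordan-type theorem forces $\langle g,h\rangle=A_n$. For the groups of Lie type, the plan is to enumerate the conjugacy classes of maximal subgroups via Aschbacher's theorem (and its Liebeck--Seitz analogue for exceptional groups) and then to bound the three factors above class by class, leaning on the Liebeck--Saxl--Shalev fixed-point ratio estimates, which for most Aschbacher classes $\mathcal{C}_i$ are exponentially small in the rank $r$ and so contribute negligibly to the sum.

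The main obstacle is the handling of elements $g$ lying in unusually many maximal subgroups. In the classical groups this occurs for transvections and short-root elements, whose fixed-point ratios on the subspace-stabiliser cosets ($\mathcal{C}_1$) and the imprimitivity cosets ($\mathcal{C}_2$) decay only as $1/q$ rather than tending to zero with $r$; for such elements one has to construct a suitable $h$ explicitly, typically inside a carefully chosen torus or Levi normaliser disjoint from the offending stabilisers. Similarly, the small-rank boundary cases $\PSL_2(q)$, $\PSL_3(q)$, and the low-dimensional unitary and symplectic groups --- where several of the relevant coset actions are $2$-transitive and the crude bound above fails --- need dedicated \emph{ad hoc} treatment. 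It is precisely at these awkward cases that the Guralnick--Kantor and Stein papers do genuine work, and any independent proof would need to supply the corresponding explicit generator constructions.
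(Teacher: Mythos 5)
The paper does not prove this proposition at all: it is stated as a known theorem, cited to Guralnick--Kantor and to Stein, who obtained it independently. So there is no ``paper's own proof'' to compare against. What you have written is not a proof but a survey of how the proof goes in the literature, and to your credit you say so openly in your last sentence.

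With that caveat, the outline is essentially the right one, matching the probabilistic route of Guralnick--Kantor: bound $\sum_{M \ni g}|M|$ over maximal subgroups by grouping into conjugacy classes, translate into fixed-point ratios of $g$ on coset spaces, and invoke fixed-point-ratio bounds for groups of Lie type together with an \textsc{Atlas} computation for sporadics and a direct combinatorial argument for $A_n$. You also correctly locate the genuine difficulties: unipotent elements (transvections, short-root elements) whose fixed-point ratios on $\mathcal{C}_1$ and $\mathcal{C}_2$ parabolic cosets decay only like $1/q$; and the low-rank classical groups where the naive union bound simply fails and one must produce a generating partner $h$ by explicit construction. These are not peripheral loose ends --- they are the content of the theorem, and they occupy the bulk of the cited papers. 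So the proposal is a reasonable road-map but, as a standalone proof, it has a genuine gap exactly where you flag it: every case in which the union bound is inadequate is handled by \emph{ad hoc} constructions that are not supplied here. In the context of the present paper, the correct resolution is simply to cite \cite{guralnick-kantor} and \cite{stein}, as the authors do; reproving a CFSG-dependent classification theorem of this size is out of scope.
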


\begin{corollary}\label{c: 32}
Let $G$ be a finite simple group and let $S$ be a subset of $G$ of size at least two. Then some translate of $S$ generates $G$.
\end{corollary}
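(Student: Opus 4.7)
The plan is to reduce the problem to a single application of Proposition~\ref{p: stein}. Since $|S|\geq 2$, fix two distinct elements $s_1, s_2 \in S$, and set $x := s_1^{-1}s_2$, which is a nontrivial element of $G$. For any $g \in G$, the subgroup $\langle Sg \rangle$ contains both $s_1 g$ and $(s_1 g)^{-1}(s_2 g) = g^{-1} x g$, so it certainly contains the subgroup $\langle s_1 g,\, g^{-1}xg \rangle$. Conjugating by $g$, this latter subgroup is isomorphic (via an inner automorphism of $G$) to $\langle g s_1,\, x\rangle$. Hence it suffices to find $g \in G$ such that $\langle g s_1, x\rangle = G$.

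The key step is now immediate: since $x \neq 1$ and $G$ is $\tfrac{3}{2}$-generated by Proposition~\ref{p: stein}, there exists $h \in G$ with $\langle x, h\rangle = G$. Setting $g := h s_1^{-1}$ gives $gs_1 = h$, so $\langle gs_1, x\rangle = \langle h, x\rangle = G$, and therefore $\langle Sg\rangle = G$, as desired.

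There is no substantive obstacle here; the only mild subtlety is choosing the right parametrisation so that the $\tfrac{3}{2}$-generation hypothesis is applied to a fixed nontrivial element (namely $x = s_1^{-1}s_2$) while the freedom to translate $S$ on the right is converted, via a conjugation, into the freedom to choose the companion generator $h$. Once this translation is made, the corollary follows in one line from Proposition~\ref{p: stein}.
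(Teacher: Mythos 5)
Your proof is correct, and it follows the same essential route the paper has in mind: the paper does not give its own argument for the corollary, merely noting that it is ``an easy consequence'' of Proposition~\ref{p: stein} and pointing to \cite{GPSS}, and your reduction to $\frac32$-generation is precisely that easy consequence. One small remark: the detour through an inner automorphism is avoidable. If instead of $x=s_1^{-1}s_2$ you use $y:=s_2s_1^{-1}$, then $(s_2g)(s_1g)^{-1}=y$ directly, so $\langle Sg\rangle\supseteq\langle s_1g,\,y\rangle$ with no conjugation needed; choosing $h$ with $\langle y,h\rangle=G$ and setting $g=s_1^{-1}h$ then gives $\langle Sg\rangle\supseteq\langle h,y\rangle=G$ in one step. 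This is only a cosmetic streamlining of your argument.
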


Finally we need the Product Theorem, proved independently in \cite{BGT} and \cite{PS}.

\begin{theorem}\label{t: generating sets}
Fix a positive integer $r$. There exists a positive constant $\eta=\eta(r)$ such
that, for $G$ a finite simple group of Lie type of rank $r$ and $S$ a
generating set of $G$,  either $S^3=G$ or $|S^3|\geq |S|^{1+\eta}.$
\end{theorem}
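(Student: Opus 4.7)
The plan is to argue by contradiction: suppose $S$ generates $G$, $S^3 \neq G$, and $|S^3| < |S|^{1+\eta}$ for some small $\eta = \eta(r) > 0$ to be chosen. Replacing $S$ by $S \cup S^{-1} \cup \{1\}$ multiplies sizes by at most a factor of three, so we may assume $S$ is symmetric and contains the identity. Setting $K = |S^3|/|S|$, Pl\"unnecke--Ruzsa, or equivalently Theorem~\ref{t: petridis} applied with $A = B = S$, yields $|S^h| \leq K^{Ch}|S|$ for an absolute constant $C$, and a standard extraction then produces a $K^{O(1)}$-approximate subgroup $\widetilde{A} \subseteq S^{O(1)}$ with $|\widetilde{A}|$ comparable to $|S|$. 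The rest of the argument is devoted to showing that, if $\eta$ is sufficiently small in terms of $r$, no such $\widetilde{A}$ can exist.

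To access algebraic structure, embed $G$ into (a bounded-index subquotient of) the $\Fq$-points of a connected simple algebraic group $\mathbf{G}$ over $\overline{\Fq}$ of dimension $O(r^2)$. The technical heart of the argument is a Larsen--Pink type inequality for approximate groups: if $\widetilde{A} \subset \mathbf{G}(\overline{\Fq})$ is a $K$-approximate subgroup not contained in any proper closed subgroup of $\mathbf{G}$ of bounded complexity, then for every closed subvariety $V \subseteq \mathbf{G}$ of bounded complexity,
\[
|\widetilde{A} \cap V| \leq K^{O(1)}\, |\widetilde{A}|^{\dim V / \dim \mathbf{G}}.
\]
The key mechanism is \emph{escape from subvarieties}: since $\widetilde{A}$ generates $G$, for any prescribed proper closed subvariety $W$ of bounded degree one can find a short word in the elements of $S$ lying outside $W$. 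Iterating escape against the defining equations of $V$, combined with a Lang--Weil style dimension count in the spirit of Larsen and Pink, yields the displayed bound.

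With the intersection inequality in hand, I would locate a regular semisimple element $t$ in a bounded power of $S$---again via escape, applied this time to the singular locus of the Steinberg quotient. Its centraliser is a maximal torus $\mathbf{T} \subset \mathbf{G}$. Applying the intersection inequality to the fibres of the Steinberg map $\mathbf{G} \to \mathbf{T}/W$ and to various conjugates of $\mathbf{T}$ forces $\widetilde{A}$ to spread across conjugacy classes in a way inconsistent with concentration on any proper closed subgroup, and a counting step yields $|\widetilde{A}| \geq |G|^{1-\varepsilon(\eta)}$. Propositions~\ref{p: gowers} and~\ref{p: landseitz} then imply $S^{O(1)} = G$, and tightening constants upgrades this to $S^3 = G$, contradicting the initial assumption.

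The principal obstacle is the Larsen--Pink inequality for approximate groups. The classical Larsen--Pink theorem applies to honest finite subgroups of algebraic groups; extending its intersection bounds to $K$-approximate subgroups---where containment is replaced by Ruzsa-type control and one cannot freely multiply---requires a careful dictionary between algebraic geometry over $\overline{\Fq}$ and additive combinatorics, and accounts for the bulk of the work in both \cite{BGT} and \cite{PS}. The remaining ingredients (Pl\"unnecke--Ruzsa, locating regular semisimple pivots via escape, and the final generation step via Proposition~\ref{p: gowers}) are routine by comparison.
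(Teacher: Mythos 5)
The paper does not prove Theorem~\ref{t: generating sets} at all: it is the Product Theorem, cited as an external black box from Breuillard--Green--Tao \cite{BGT} and Pyber--Szab\'o \cite{PS}, so there is no in-paper proof to compare against. With that caveat, your outline is a broadly faithful summary of the strategy in those references: pass to a symmetric set containing the identity, use Pl\"unnecke--Ruzsa control to extract an approximate subgroup, prove a Larsen--Pink type intersection inequality for approximate subgroups via escape from subvarieties and a Lang--Weil dimension count, locate a regular semisimple pivot in a bounded power of $S$, and conclude by a counting argument plus the Gowers trick. You also correctly identify the approximate-subgroup Larsen--Pink inequality as the technical core, and you are honest that you are stating rather than proving it, so the proposal is a roadmap rather than a proof. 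Two smaller points worth flagging. First, the extraction of a $K^{O(1)}$-approximate subgroup is not quite automatic from Theorem~\ref{t: petridis} (Petridis/Pl\"unnecke--Ruzsa) alone; one also needs the noncommutative Balog--Szemer\'edi--Gowers theorem or Tao's covering machinery to pass from bounded tripling to an approximate group. Second, the endgame does not go ``show $S^{O(1)}=G$ and then tighten constants to $S^3=G$''; rather, one deduces $|S|\geq |G|^{1-\delta}$ directly (combining the lower bound $|S^C|\geq |G|^{1-\varepsilon}$ with the Pl\"unnecke-type upper bound $|S^C|\leq K^{O(C)}|S|$ under the contradiction hypothesis that $K=|S^3|/|S|$ is small), after which Proposition~\ref{p: gowers} applied with $A=B=C=S$ yields $S^3=G$ in one step. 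Since the paper treats the Product Theorem as input, none of this affects the paper; but if you intend the sketch to stand as a proof, the Larsen--Pink step and the approximate-group extraction each require the full machinery of \cite{BGT} or \cite{PS}.
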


\section{Medium-sized sets}\label{s: medium sets}

\begin{lem}\label{l: two sets}
Fix $r>0$. There exists $\varepsilon>0$ such that if $A$ and $B$ are subsets of $G=G_r(q)$, a finite simple group of Lie type of rank $r$, with $2\leq |B|\leq |A|$, then one of the following holds:
\begin{enumerate}
 \item $|A|\geq |B|^{1+\varepsilon}$;
 \item there exists $g\in G$ such that $|A\cdot B^g|\geq |A|\cdot |B|^{\varepsilon}$;
 \item $|A|\geq |G|^{1/26}\cdot |B|^{25/26}$;
 \item there exists $g\in G$ such that $|A\cdot B^g|\geq |G|^{1/25} \cdot |A|^{24/25}$.
\end{enumerate}
\end{lem}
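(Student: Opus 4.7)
The plan is to assume that (1), (2), and (3) all fail and derive (4). The failure of (1) gives $\gamma := |A|/|B| < |B|^{\varepsilon}$, and the failure of (2) gives $|AB^g| < |A| \cdot |B|^{\varepsilon}$ for every $g \in G$. A key preliminary observation is that for any $b \in B^g$, the conjugation identity $b B^g b^{-1} = B^{gb^{-1}}$ yields $|A b B^g| = |A B^{gb^{-1}}|$, which is again less than $|A| \cdot |B|^{\varepsilon}$ by the failure of (2). Thus, with $\alpha = \beta = |B|^{\varepsilon}$ and $\gamma \leq |B|^{\varepsilon}$, Petridis's theorem (Theorem~\ref{t: petridis}) applies to $A$ and $B^g$, producing a subset $S \subseteq A$ satisfying
$$|S \cdot (B^g)^h| \leq |B|^{\varepsilon(13h-15)} \cdot |S| \quad \text{for all integers } h > 1.$$

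Next I combine this bound with the Product Theorem (Theorem~\ref{t: generating sets}). In the case $\langle B \rangle = G$, iterating the Product Theorem yields $(B^g)^{h_0} = G$ for some $h_0$ bounded in terms of $\log|G|/\log|B|$ and $r$. If instead $\langle B \rangle$ is a proper subgroup, I enlarge $B$ to $B \cup B^{g_0}$ for a $g_0$ chosen so the new set generates $G$; such $g_0$ exists by a normal closure argument combined with the $3/2$-generation of $G$ (Proposition~\ref{p: stein}), and the enlargement at most doubles $|B|$, which can be absorbed into $\varepsilon$. Either way, substituting $h = h_0$ into the Petridis inequality and using $S \cdot G = G$ produces $|A| \geq |S| \geq |G|/|B|^{O_r(\varepsilon h_0)}$. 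Combining with the failure of (3), which states $|A| < |G|^{1/26}|B|^{25/26}$, forces the analysis into the dense regime $|A|, |B| \geq |G|^{1 - O_r(\varepsilon)}$; in this regime, for $\varepsilon$ small enough in terms of $r$, $h_0$ itself is bounded purely by a function of $r$.

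In this dense regime I appeal to a mixing bound derivable from Proposition~\ref{p: gowers} (Gowers) together with Proposition~\ref{p: landseitz} (giving $\mindeg(G) > |G|^{1/(8r^2)}$): a standard quasirandomness argument shows that $|AB^g| \geq |G|\cdot(1 - o_r(1))$ for some $g \in G$ whenever $|A||B|$ comfortably exceeds $|G|^2/\mindeg(G)$. Since $|A||B| \geq |G|^{2 - O_r(\varepsilon)}$ in the dense regime and $|G|^2/\mindeg(G) \leq |G|^{2 - 1/(8r^2)}$, this applies for $\varepsilon$ sufficiently small in terms of $r$. The resulting lower bound on $|AB^g|$ comfortably exceeds $|G|^{1/25}|A|^{24/25}$ (which is at most $|G|$), verifying (4).

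The main obstacle will be tracking constants to hit the exponents $1/26$ and $1/25$ precisely, while keeping $\varepsilon = \varepsilon(r)$ uniform across the transition from the Petridis/Product Theorem regime to the dense regime. A secondary subtlety is the enlargement step when $\langle B \rangle \neq G$, which must preserve the hypotheses of Petridis's theorem without degrading the constants.
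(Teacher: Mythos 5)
Your proposal diverges from the paper's proof in a way that introduces a genuine gap. You iterate the Product Theorem until $(B^g)^{h_0} = G$ and then substitute $h = h_0$ into Petridis's bound; but $h_0$ is not bounded by a function of $r$ alone. When $|B|$ is small compared to $|G|$, iterating $|B^3| \geq |B|^{1+\eta}$ takes roughly $\log_{1+\eta}\bigl(\log|G|/\log|B|\bigr)$ rounds, so $h_0$ grows like a power of $\log|G|/\log|B|$, which is unbounded as $|B|$ stays bounded and $|G|\to\infty$. Consequently the Petridis estimate $|A| \geq |G| / |B|^{\varepsilon(13 h_0 - 15)}$ degenerates: for $|B|$ of bounded size, $|B|^{\varepsilon h_0}$ can exceed any fixed power of $|G|$, so combining with the failure of (3) does \emph{not} force $|B| \geq |G|^{1 - O_r(\varepsilon)}$. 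Your remark that ``$h_0$ is bounded purely by a function of $r$ in this regime'' is only valid once the dense regime has been established, but that bound on $h_0$ is precisely what you need to establish the dense regime in the first place; the argument is circular. The circularity cannot simply be patched, because for very small $A,B$ (say $|A|=|B|=2$) option (4) is arithmetically impossible -- $|AB^g|\leq |A||B| = 4 < |G|^{1/25}$ -- so any argument that always derives (4) when (1), (2), (3) fail must be wrong; in that regime it is option (2) that must come out.

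The paper avoids this entirely by applying Petridis with the \emph{fixed} value $h=3$ and then using the Product Theorem as a dichotomy, not as an iteration: either $|B_0^3| \geq |B|^{1+\eta}$, which forces $|B|^\eta \leq \alpha^{15}\beta^2\gamma^8$ and yields options (1) or (2); or $B_0^3 = G$, which forces $|G| \leq \alpha^{15}\beta^2\gamma^7|A|$ and yields options (3) or (4). The fixed $h$ keeps the Petridis exponents bounded, independent of how $|A|,|B|$ compare to $|G|$, so no passage to a dense regime is needed. A secondary issue: the paper handles $\langle B\rangle\ne G$ by replacing $B$ with a translate (Corollary~\ref{c: 32}), which changes none of the cardinalities involved; your proposed enlargement to $B\cup B^{g_0}$ would need a ``conjugate spread'' statement (for every nontrivial $b$ there is $g_0$ with $\langle b,b^{g_0}\rangle=G$) that does not follow directly from $3/2$-generation and that you do not prove.
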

\begin{proof}
Appealing to Corollary~\ref{c: 32}, let $B_0=Bx$ be a translate of $B$ that generates $G$.
Define $\gamma=|A|/|B|$, $\alpha=|AB|/|A|$ and $\beta=\max\{|A\cdot B^{b^{-1}}|/|A| \mid b\in B\}$.  We apply Theorem~\ref{t: petridis} with $h=3$ to obtain that there exists $S\subset A$ such that
\[
 |S\cdot B_0^3| \leq \alpha^{15}\beta^2\gamma^7|S|.
\]
This implies in particular that
\begin{equation}\label{e: petridis}
 |B_0^3|\leq \alpha^{15}\beta^2\gamma^7|A|.
\end{equation}
Now, since $B_0$ generates $G$, Theorem~\ref{t: generating sets} gives two possibilities for $B_0^3$.

First, suppose that $|B_0^3|\geq |B|^{1+\eta}$. We obtain that
\[
 |B|^\eta \leq \alpha^{15}\beta^2\gamma^8.
\]
We conclude that at least one of $\alpha, \beta$ or $\gamma$ is greater than or equal to $|B|^{\eta/25}$, and this implies that either $|A|\geq |B|^{1+\eta/25}$, or else there exists $g\in G$ such that $|A\cdot B^g|\geq |A|\cdot |B|^{\eta/25}$. Taking $\varepsilon=\eta/25$, we obtain one of the first two listed possibilities.

The second possibility is that $B_0^3=G$. Now \eqref{e: petridis} implies that
\[
 |G|\leq \alpha^{15}\beta^2\gamma^7|A|.
\]
We  conclude that at least one of $\alpha, \beta$ or $\gamma$ is greater than or equal to $(|G|/|A|)^{1/25}$, and some simple rearranging yields the final two listed possibilities. 
\end{proof}

\begin{lem}\label{l: two sets 2}
Fix $r>0$ and $0<\delta<1$. There exists $\eta=f(r,\delta)>0$ such that if $A$ and $B$ are subsets of $G=G_r(q)$, a finite simple group of Lie type of rank $r$, with $2\leq |B|\leq |A|$, then one of the following holds:
\begin{enumerate}
 \item $|A|\geq |B|^{1+\eta}$;
 \item there exists $g\in G$ such that $|A\cdot B^g|\geq |A|\cdot |B|^{\eta}$ and there exists $h\in G$ such that $|B^h \cdot A|\geq |A|\cdot |B|^{\eta}$;
 \item $|B|\geq |G|^\delta$.
\end{enumerate}
 \end{lem}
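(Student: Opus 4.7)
The plan is to deduce Lemma~\ref{l: two sets 2} directly from Lemma~\ref{l: two sets} by a simple symmetrisation, using the new hypothesis $|B|<|G|^{\delta}$ to rule out the ``global'' alternatives (3) and (4) of Lemma~\ref{l: two sets}. Assume that conclusions (1) and (3) of Lemma~\ref{l: two sets 2} both fail, so $|A|<|B|^{1+\eta}$ and $|B|<|G|^{\delta}$; the goal is to force conclusion (2), i.e.\ to find a $g$ giving right-growth and an $h$ giving left-growth.

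First I would apply Lemma~\ref{l: two sets} to the pair $(A,B)$ with its constant $\varepsilon_0=\varepsilon_0(r)$. The aim is to pick $\eta=f(r,\delta)$ small enough (one checks that $\eta\leq\min\{\varepsilon_0,(1-\delta)/(26\delta)\}$ suffices) so that: conclusion (1) of Lemma~\ref{l: two sets} is absorbed into conclusion (1) of Lemma~\ref{l: two sets 2}; conclusion (3) combined with $|A|<|B|^{1+\eta}$ forces $|B|>|G|^{1/(1+26\eta)}$, contradicting $|B|<|G|^{\delta}$; and conclusion (4), combined with $|A|<|G|^{\delta(1+\eta)}$, upgrades $|A\cdot B^{g}|\geq|G|^{1/25}|A|^{24/25}=|A|\cdot(|G|/|A|)^{1/25}$ to $|A\cdot B^{g}|\geq|A|\cdot|B|^{\eta}$. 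In every surviving case I obtain some $g\in G$ with $|A\cdot B^{g}|\geq|A|\cdot|B|^{\eta}$.

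Second, to produce the left-multiplication half of (2), I would apply Lemma~\ref{l: two sets} to the pair $(A^{-1},B^{-1})$, which still satisfies $2\leq|B^{-1}|\leq|A^{-1}|$ and lives in $G$. The identity $(A^{-1}\cdot(B^{-1})^{h})^{-1}=B^{h}\cdot A$ shows $|A^{-1}\cdot(B^{-1})^{h}|=|B^{h}\cdot A|$, so every growth statement about right-translates of $B^{-1}$ against $A^{-1}$ transfers verbatim into a growth statement about left-translates of $B$ against $A$. Conclusions (1) and (3) of Lemma~\ref{l: two sets} are invariant under inversion, so the same arithmetic as above goes through unchanged and yields some $h\in G$ with $|B^{h}\cdot A|\geq|A|\cdot|B|^{\eta}$. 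Together with the first step this establishes conclusion (2) of Lemma~\ref{l: two sets 2}.

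The only real care needed is in the bookkeeping of constants; there is no genuine obstacle, since $(1-\delta)/(26\delta)$ is positive for every $\delta<1$. Note however that the eventual $\eta$ must deteriorate as $\delta\to 1$: this is unavoidable and is precisely why the hypothesis $\delta<1$ (rather than $\delta\leq 1$) appears in the statement.
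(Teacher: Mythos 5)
Your proposal is correct and follows essentially the same route as the paper's own proof: same choice of $\eta=\min\{\varepsilon,(1-\delta)/(26\delta)\}$, same reduction via Lemma~\ref{l: two sets}, same symmetrisation through $(A^{-1},B^{-1})$ with the identity $|B^h\cdot A|=|A^{-1}\cdot(B^{-1})^h|$. The only difference is organizational: you argue by contrapositive (assume (1) and (3) fail, deduce (2)), whereas the paper runs a direct case analysis over the four alternatives of Lemma~\ref{l: two sets}; your framing is arguably a touch cleaner in the option-(4) case, but the underlying arithmetic is identical.
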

\begin{proof}
Let $\varepsilon$ be the positive number whose existence is guaranteed by Lemma~\ref{l: two sets}. Define $\eta = \min\{\frac{1-\delta}{26\delta}, \varepsilon\}$. Then $\delta \leq \frac{1}{1+26\eta}$, and we apply Lemma~\ref{l: two sets}. If the third option of that lemma holds, then we obtain that either $|A|\geq |B|^{1+\eta}$ or else
\[
|G|^{1/26}\cdot |B|^{25/26}<|B|^{1+\eta} 
\]
and, rearranging, we get that $|B|\geq |G|^{\frac{1}{1+26\eta}}\geq|G|^\delta$, as required. 

Similarly, if the fourth option of Lemma~\ref{l: two sets} holds, then we obtain that either $|A\cdot B^g|\geq |A|\cdot |B|^{\eta}$ or else
\[
 |G|^{1/25}|A|^{24/25}<|A|\cdot |B|^\eta.
\]
in which case we obtain that $|G|<|A|\cdot |B|^{25\eta}$. Then either $|A|\geq |B|^{1+\eta}$ (and so (1) holds), or else we obtain that $|G|<|B|^{1+26\eta}$ and we obtain (3) as before. 

If the first option of Lemma~\ref{l: two sets} holds, then the first option holds here. Finally, suppose that the second option of Lemma~\ref{l: two sets} holds. We obtain immediately that the first part of option (2) holds here. To see that the second part holds, observe that
\[
 |B^h\cdot A| = |A^{-1}\cdot (B^{-1})^h|.
\]
Now we can apply Lemma~\ref{l: two sets} to the two sets $A^{-1}$ and $B^{-1}$. If the first, third or fourth option holds, then the argument given above implies that the item (1) or (3) holds here for $A^{-1}$ and $B^{-1}$, hence also for $A$ and $B$. On the other hand if the second option holds, then we obtain the second part of item (2) here, and we are done.
\end{proof}

\begin{lem}\label{l: medium sets}
 Fix $0<\zeta<\delta<1$ and $r$ a positive integer. Then there exists $c=f(\zeta, \delta, r)>0$ such that if $S_1,\dots, S_t\subset G$, where
 \begin{enumerate}
 \item $G$ is a finite simple group of Lie type of rank $r$;
  \item $|S_i|\geq |G|^\zeta$;
  \item $\prod\limits_{i=1}^t |S_i| \geq |G|^{c}$;
 \end{enumerate}
then there exist elements $g_1,\dots, g_t\in G$ and positive integers $k_1, k_2,k_3$ such that $t=k_1+k_2+k_3$ and
\[\min\{|T_1|,|T_2|,|T_3|\}\geq |G|^\delta\]
where $T_1=S_1^{g_1}\cdots S_{k_1}^{g_{k_1}}$, 
$T_2=S_{k_1+1}^{g_{k_1+1}}\cdots S_{k_1+k_2}^{g_{k_1+k_2}}$ and 
$T_3=S_{k_1+k_2+1}^{g_{k_1+k_2+1}}\cdots S_{t}^{g_t}$.
\end{lem}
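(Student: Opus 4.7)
My plan is to derive the lemma from a single-product construction applied three times. Specifically, I first aim to prove the auxiliary claim: there is a constant $C_0=C_0(\zeta,\delta,r)$ such that for any consecutive run $S_{i_1},\dots,S_{i_m}$ of the given sets with $\prod_j|S_{i_j}|\geq|G|^{C_0}$, one can find conjugates $h_1,\dots,h_p$ (with $p\leq m$) such that $|S_{i_1}^{h_1}\cdots S_{i_p}^{h_p}|\geq|G|^\delta$. Granting this, the lemma follows by splitting $S_1,\dots,S_t$ into three consecutive blocks of which the first two have total product size in $[|G|^{C_0+1},|G|^{C_0+2}]$ (this is possible because $\zeta\log|G|\leq\log|S_i|\leq\log|G|$) and the third has total product size $\geq|G|^{C_0}$. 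We apply the auxiliary claim to a prefix of each block, then multiply the remaining sets of each block (with arbitrary conjugates) onto the product to obtain $T_1,T_2,T_3$, each of size $\geq|G|^\delta$. Taking $c=3C_0+4$ is more than enough.

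For the auxiliary claim I would recurse on intermediate targets. Let $\eta=f(r,\delta)>0$ be the constant from Lemma~\ref{l: two sets 2}, define $\Delta^{(k)}=\delta/(1+\eta)^k$, and let $K$ be the smallest integer with $\Delta^{(K)}\leq\zeta$ (a constant depending only on $r,\zeta,\delta$). Inductively, for $k=K,K-1,\dots,0$, I construct a procedure $P_k$ which, given a consecutive block of input sets of product size $\geq|G|^{C_k}$, returns a prefix product of size $\geq|G|^{\Delta^{(k)}}$, where $C_K=\zeta$ and $C_k=2C_{k+1}+O(1)$, so $C_0$ is bounded by a constant. The base case $P_K$ just returns the first input set (which has size $\geq|G|^\zeta\geq|G|^{\Delta^{(K)}}$). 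For $k<K$, $P_k$ first calls $P_{k+1}$ on a minimal prefix of its input to produce $T$ with $|T|\geq|G|^{\Delta^{(k+1)}}$, then $P_{k+1}$ on a minimal prefix of the remaining sets to produce $U$ with $|U|\geq|G|^{\Delta^{(k+1)}}$, and applies Lemma~\ref{l: two sets 2} to $T,U$ (with the larger of the two in the role of $A$) to choose a conjugate $g$ yielding $|T\cdot U^g|\geq|G|^{\Delta^{(k)}}$. In Case~(2) this follows from $\max(|T|,|U|)\cdot\min(|T|,|U|)^{\eta}\geq|G|^{\Delta^{(k+1)}(1+\eta)}=|G|^{\Delta^{(k)}}$ (using the ``$|B^{h}\cdot A|$'' variant when $T$ is the smaller, so that the product sits in the correct order $T\cdot U^{h^{-1}}$); Case~(3) gives $\min(|T|,|U|)\geq|G|^\delta\geq|G|^{\Delta^{(k)}}$; and in Case~(1) the larger of $T,U$ itself has size $\geq\min(|T|,|U|)^{1+\eta}\geq|G|^{\Delta^{(k)}}$, so $T\cdot U^g$, which contains a translate of it, again has the required size.

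The main obstacle is precisely Case~(1) of Lemma~\ref{l: two sets 2}, which provides no lower bound on the product size $|T\cdot U^g|$; a naive greedy strategy that multiplies each $S_{i+1}$ in turn onto a growing accumulator $T$ can stall indefinitely whenever $|T|$ becomes polynomially larger than every remaining $|S_{i+1}|$. The recursion on the geometric sequence $\Delta^{(k)}$ is designed precisely to neutralize this obstacle: it keeps the two operands $T,U$ fed to Lemma~\ref{l: two sets 2} comparable in size, so Case~(1), when it does occur, already yields a sub-product at the next higher target. The recursion depth $K=O_{r,\zeta,\delta}(1)$, so the final budget constant $C_0$ (and hence the constant $c$ in the lemma) depends only on $r,\zeta,\delta$ as required.
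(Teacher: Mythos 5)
Your proof is correct and rests on the same key mechanism as the paper's: repeated application of Lemma~\ref{l: two sets 2} to pairs of \emph{comparably sized} sets so that the exponent grows by a factor $(1+\eta)$ at each step, with a budget constant of order $\exp(O_{r,\delta,\zeta}(1))$. The difference is purely organizational: you recurse top-down from the target exponent $\delta$ through the geometric ladder $\Delta^{(k)}=\delta/(1+\eta)^k$ and build prefix products, whereas the paper iterates bottom-up, pairing up all sets in the current family at each level and tracking the minimum exponent $\kappa$; both are balanced binary-tree constructions and yield constants of the same order. One small remark: you present the paper's strategy as if it were the single-accumulator greedy that can stall under Case~(1), but the level-by-level pairing used in the paper also keeps the two operands comparable (both bounded below by $|G|^\kappa$) and therefore handles Case~(1) exactly as your recursion does.
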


Note that no attempt is made in the subsequent proof to optimise $c$.

\begin{proof}
Let  $\eta=f(r,\delta)$ be the constant whose existence is guaranteed by Lemma~\ref{l: two sets 2}. Let $S_1,\dots, S_t$ be subsets of $G$ satisfying condition (2).

Let $\kappa=\log|S_i|/\log|G|$ where $S_i$ is the smallest set in $S_1,\dots, S_t$. By supposition, $\kappa\geq \zeta$.
We will apply Lemma~\ref{l: two sets 2} a number of times so as to produce a new family of larger sets $S_1',\dots, S_{\lfloor\frac{t}{2}\rfloor}'$: For each even $i$ between $2$ and $t$, let $A$ be the larger of $S_i$ and $S_{i-1}$, and let $B$ be the smaller. Lemma~\ref{l: two sets 2} gives three possibilities.

If the first possibility holds, then  $|A|\geq|G|^{\kappa(1+\eta)}$, and we let $S_{i/2}'=S_{i-1}S_i$. If the second possibility holds and $A=S_{i-1}$, then we choose $g$ so that $|A\cdot B^g|$ is as large as possible, and we set $S_{i/2}'=S_{i-1}S_i^g$; if the second possibility holds and $A=S_i$, then we choose $h$ so that $|B^h\cdot A|$ is as large as possible, and we set $S_{i/2}'=S_{i-1}^hS_i$. Notice that in both of these cases we end with $|S_{i/2}'| \geq |G|^{\kappa(1+\eta)}$.
If the third possibility holds, then $|B|\geq|G|^\delta$ and we set $S_{i/2}'=S_{i-1}\cdot S_i$.

Observe that there are $\lfloor t/2\rfloor\geq t/3$ sets in our new family, and that the minimum size of a set in the new family is at least $|G|^{\min\{\kappa(1+\eta),\delta\}}$. 

We repeat this process as long as $\kappa<\delta$. We must choose $c$ to ensure that we end with at least $3$ sets in our final family: all of these, by construction, will have size at least $|G|^\delta$, and the result follows.
Note first, that the minimum size of a set in the family produced after $i$ iterations is at least $|G|^{\zeta(1+\eta)^i}$. Now $\zeta(1+\eta)^i\geq \delta$ if and only if 
\[
 i\geq I:=\frac{\log \delta - \log \zeta}{\log(1+\eta)}.
\]
On the other hand, after each iteration, the number of sets diminishes by at most a third, so if we start with at least $3^{I+1}$ sets, then we will definitely end with at least $3$ sets, as required. To ensure that we start with this number of sets, then we can take $c=3^{I+1}$, and we are done.
\end{proof}

\section{Large sets}\label{s: large sets}

To deal with large sets, we will use ``the Gowers trick'', Proposition~\ref{p: gowers}. When combined with our work on medium-sized sets, we obtain the conclusion that we need.

\begin{prop}\label{p: large sets}
  Fix $0<\zeta<1$ and $r$ a positive integer. Then there exists $c=f(r,\zeta)>0$ such that if $S_1,\dots, S_t\subset G$, where
 \begin{enumerate}
 \item $G$ is a finite simple group of Lie type of rank $r$;
  \item $|S_i|\geq |G|^\zeta$;
  \item $\prod\limits_{i=1}^t|S_i| \geq |G|^{c}$;
 \end{enumerate}
then there exist elements $g_1,\dots, g_t\in G$ such that 
\[
 S_1^{g_1}\cdots S_t^{g_t}=G.
\]
\end{prop}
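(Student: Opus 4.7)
The strategy is to use Lemma~\ref{l: medium sets} to collapse the sequence $S_1,\dots, S_t$ (after conjugation) into exactly three consecutive products $T_1, T_2, T_3$, each large enough that the ``Gowers trick'' produces the full group. The quantitative threshold we need is dictated by the remark following Proposition~\ref{p: landseitz}: if three subsets each have size strictly greater than $|G|^{1-1/(24r^2)}$, then their product is $G$.

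Concretely, I would begin by choosing a parameter $\delta$ strictly between $1-\frac{1}{24r^2}$ and $1$; for definiteness take $\delta = 1-\frac{1}{25r^2}$. With $r$ and this $\delta$ fixed, invoke Lemma~\ref{l: medium sets} to obtain a constant $c_0 = f(\zeta,\delta,r)$. Define $c = c_0$ (or, if needed to accommodate the strict inequalities in the Gowers threshold, $c = c_0 + 1$). Given subsets $S_1,\dots, S_t$ satisfying hypotheses (1)--(3) of the proposition with this $c$, the hypotheses of Lemma~\ref{l: medium sets} are satisfied, and that lemma supplies conjugates $g_1,\dots,g_t$ and a partition $t = k_1 + k_2 + k_3$ such that the three consecutive products
\[
T_1 = S_1^{g_1}\cdots S_{k_1}^{g_{k_1}},\quad
T_2 = S_{k_1+1}^{g_{k_1+1}}\cdots S_{k_1+k_2}^{g_{k_1+k_2}},\quad
T_3 = S_{k_1+k_2+1}^{g_{k_1+k_2+1}}\cdots S_t^{g_t}
\]
each satisfy $|T_i| \geq |G|^{\delta} > |G|^{1 - 1/(24r^2)}$.

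Next, I would apply Proposition~\ref{p: gowers} combined with Proposition~\ref{p: landseitz}. Since $\mindeg(G) > |G|^{1/(8r^2)}$, the product $|T_1||T_2||T_3| > |G|^{3 - 3/(25r^2)} \geq |G|^3/\mindeg(G)$, so Proposition~\ref{p: gowers} yields $T_1 T_2 T_3 = G$. But $T_1 T_2 T_3 = S_1^{g_1}\cdots S_t^{g_t}$, completing the proof.

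The main point to verify is that the constant $\delta$ can indeed be chosen strictly greater than $1 - 1/(24r^2)$ while still lying in the interval $(\zeta,1)$ required by Lemma~\ref{l: medium sets}. This is fine provided $\zeta < 1 - 1/(24r^2)$; if the given $\zeta$ is already larger than $1 - 1/(24r^2)$, there is nothing to reduce, since one can directly group $S_1,\dots,S_t$ into three blocks and apply Gowers with an even more trivial argument (taking $c$ large enough that some three individual $|S_i|$ already exceed $|G|^{1-1/(24r^2)}$). Thus there is no serious obstacle; Lemma~\ref{l: medium sets} has done essentially all the work, and the proposition follows by matching its output to the Gowers threshold.
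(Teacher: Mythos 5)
Your proof is correct and follows essentially the same route as the paper: apply Lemma~\ref{l: medium sets} with $\delta$ at (or just above) the Gowers threshold $1-\frac{1}{24r^2}$, and then invoke Propositions~\ref{p: gowers} and~\ref{p: landseitz} on the resulting three sets $T_1,T_2,T_3$. The two small deviations you introduce — taking $\delta=1-\frac{1}{25r^2}$ for a strict inequality, and flagging the edge case $\zeta\geq\delta$ — are harmless; in fact the paper takes $\delta=1-\frac{1}{24r^2}$ directly (the strict inequality in Proposition~\ref{p: landseitz} already makes $|T_i|\geq|G|^\delta$ sufficient), and the edge case is trivial by the grouping you sketch.
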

\begin{proof}
Set $\delta= {1-\frac{1}{24r^2}}$ and apply Lemma~\ref{l: medium sets}. The resulting three sets $T_1,T_2,T_3$ satisfy the property that $\min\{|T_1|, |T_2|, |T_3|\}\geq |G|^\delta$ and Propositions~\ref{p: gowers} and \ref{p: landseitz} imply that $T_1\cdot T_2\cdot T_3=G$ (see the remark after Proposition~\ref{p: landseitz}). But, given the definition of the sets $T_1,T_2$ and $T_3$, the desired conclusion follows immediately.
\end{proof}

\section{Small sets}\label{s: small sets}

In this section we use a variant of the ``greedy lemma'' argument of \cite{GPSS}. First we need an easy little lemma.

\begin{lem}\label{l: base subset prelim}
If $A$ and $B$ are finite subsets of a group $G$ then
 \[
|AB||A^{-1}A\cap BB^{-1}|\geqslant |A||B|.
 \]
\end{lem}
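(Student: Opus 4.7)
The plan is to use a standard double-counting argument based on the fibers of the multiplication map. Consider the surjection
\[
\phi \colon A \times B \longrightarrow AB, \qquad (a,b) \mapsto ab.
\]
Clearly $|A \times B| = \sum_{c \in AB} |\phi^{-1}(c)|$, so it suffices to show that every fiber has size at most $|A^{-1}A \cap BB^{-1}|$.

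The key observation is that pairs lying in the same fiber are parametrized by elements of $A^{-1}A \cap BB^{-1}$. Fix $c \in AB$ and pick any basepoint $(a_0,b_0) \in \phi^{-1}(c)$. For any other $(a,b) \in \phi^{-1}(c)$ we have $ab = a_0 b_0$, hence
\[
a_0^{-1} a \;=\; b_0 b^{-1},
\]
and this common element lies in $A^{-1}A \cap BB^{-1}$. Define $\psi \colon \phi^{-1}(c) \to A^{-1}A \cap BB^{-1}$ by $(a,b) \mapsto a_0^{-1} a$. This map is injective: if $a_0^{-1}a = a_0^{-1}a'$ then $a = a'$, and then $b = a^{-1}c = (a')^{-1}c = b'$. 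Therefore $|\phi^{-1}(c)| \leq |A^{-1}A \cap BB^{-1}|$.

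Summing over $c \in AB$ yields
\[
|A|\cdot|B| \;=\; \sum_{c \in AB} |\phi^{-1}(c)| \;\leq\; |AB|\cdot |A^{-1}A \cap BB^{-1}|,
\]
which is the desired inequality. There is no real obstacle here; the only thing to be careful about is choosing the parametrization of fibers consistently so that the image lies in the intersection (rather than in just one of $A^{-1}A$ or $BB^{-1}$), and this is taken care of by the identity $a_0^{-1}a = b_0 b^{-1}$.
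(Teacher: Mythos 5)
Your proof is correct and is essentially the same double-counting argument as the paper's: the paper packages the bound as an explicit injection $\Theta\colon AB\times(A^{-1}A\cap BB^{-1})\to G\times G$ whose image contains $A\times B$, while you phrase it as a bound on the fibers of the multiplication map $A\times B\to AB$, but the key identity $a_0^{-1}a=b_0b^{-1}$ (resp.\ $a^{-1}a_i=bb_i^{-1}$) is the same in both.
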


Note that a similar result is stated by Helfgott in \cite[Lemma~2.2]{helfgott3}.

\begin{proof}
Let $m=|AB|$. Choose elements $a_1,\dots,a_m$ of $A$ and $b_1,\dots,b_m$ of $B$ such that $AB=\{a_1b_1,\dots,a_mb_m\}$. Let $A^{-1}A\cap BB^{-1}=\{x_1,\dots,x_n\}$. Consider the map 
\[
\Theta: AB\times(A^{-1}A\cap BB^{-1}) \to G\times G,\qquad (a_ib_i,x_j)\mapsto (a_ix_j^{-1},x_jb_i).
\]
The map $\Theta$ is injective, because, given an element
$(a_ix_j^{-1},x_jb_i)$ we can recover the element
$a_ib_i=a_ix_j^{-1}x_jb_i$. Since the elements $a_1,\dots, a_m$ and
$b_1,\dots, b_m$ are fixed
and each element of $AB$ has a unique expression of the form $a_kb_k$
we recover the elements $a_i$ and $b_i$, along with the element $x_j$, and injectivity follows. Therefore
\[
|AB||A^{-1}A\cap BB^{-1}|=|AB\times(A^{-1}A\cap BB^{-1})|=|\Theta(AB\times(A^{-1}A\cap BB^{-1}))|.
\]
We complete the proof by establishing that $A\times B$ is in the image of $\Theta$. Given $(a,b)$ in $A\times B$ we can choose $i$ such that $ab=a_ib_i$. Therefore $a^{-1}a_i=bb_i^{-1}$; this element belongs to $A^{-1}A\cap BB^{-1}$, and hence is equal to $x_j$, for some $j$. Therefore $(a,b)=\Theta(a_ib_i,x_j)$, as required.
\end{proof}

\begin{lem}\label{l: B}
Given subsets $A$ and $B$ of a finite group $G$ we have  
\[
\sum_{C\in\mathcal{C}(G)} \frac{|A\cap C||B\cap C|}{|C|} = \frac{1}{|B^G|}\sum_{B'\in B^G} |A\cap B'|,
\]
where $\mathcal{C}(G)$ is the set of conjugacy classes in $G$,
and $B^G$ denotes the set of $G$-conjugates of $B$.
\end{lem}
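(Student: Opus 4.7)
The identity is a double-counting claim, so my plan is to rewrite the right-hand side as a count of incidence pairs, use the orbit--counting machinery on the action of $G$ by conjugation on $B^G$, and then reorganize the resulting sum by conjugacy classes to match the left-hand side.

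More concretely, first I would expand
\[
\sum_{B'\in B^G} |A\cap B'| = \sum_{a\in A}\bigl|\{B'\in B^G : a\in B'\}\bigr|,
\]
so the task reduces to computing, for each fixed $a\in A$, how many conjugates of $B$ contain $a$. For this I set $N=N_G(B)$, note $|B^G|=|G|/|N|$, and count
\[
\bigl|\{g\in G : a\in B^g\}\bigr| = \bigl|\{g\in G : g^{-1}ag\in B\}\bigr|.
\]
The fiber over any $b\in B$ under $g\mapsto g^{-1}ag$ is either empty or a coset of $C_G(a)$, so this count equals $|C_G(a)|\cdot|B\cap a^G|$. Dividing by $|N|$ (each conjugate $B'$ arises from exactly $|N|$ choices of $g$) and using $|C_G(a)|\cdot|a^G|=|G|$ gives
\[
\bigl|\{B'\in B^G : a\in B'\}\bigr| = \frac{|G|}{|N|}\cdot\frac{|B\cap a^G|}{|a^G|} = |B^G|\cdot\frac{|B\cap a^G|}{|a^G|}.
\]

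Second, I would sum over $a\in A$ and then partition $A$ according to conjugacy class: since $\frac{|B\cap a^G|}{|a^G|}$ depends only on the class $C=a^G$, grouping contributions from $a\in A\cap C$ produces
\[
\frac{1}{|B^G|}\sum_{B'\in B^G}|A\cap B'| = \sum_{a\in A}\frac{|B\cap a^G|}{|a^G|} = \sum_{C\in\mathcal{C}(G)}\frac{|A\cap C|\,|B\cap C|}{|C|},
\]
which is the claimed identity.

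The only place that requires genuine care is the orbit-stabilizer step for the set $B$: one must track the stabilizer $N_G(B)$ correctly, so that the double count $\sum_{g}[\,a\in B^g\,]$ is converted into $\sum_{B'}[\,a\in B'\,]$ with the right factor $|N|$. Everything else is formal bookkeeping, so I do not anticipate a real obstacle beyond getting this normalization right.
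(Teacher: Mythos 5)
Your proof is correct and follows essentially the same double-counting route as the paper: both reduce to the single-element identity $\sum_{B'\in B^G} 1_{B'}(a) = |B^G|\cdot |B\cap a^G|/|a^G|$, which the paper obtains by comparing two enumerations of the incidence set $\{(a',B'): a'\in a^G,\ B'\in B^G,\ a'\in B'\}$ and you obtain by an orbit--stabilizer and fibre count on $g\mapsto B^g$. The only difference is cosmetic --- the paper unwinds the left-hand side to $\sum_{a\in A}|B\cap a^G|/|a^G|$ first, whereas you expand the right-hand side first --- and your handling of the $N_G(B)$-normalization is correct.
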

\begin{proof}
First observe that
\[
\sum_{C\in\mathcal{C}(G)} \frac{|A\cap C||B\cap C|}{|C|} = \sum_{C\in\mathcal{C}(G)} \sum_{a\in A\cap C} \frac{|B\cap C|}{|C|}= \sum_{a\in A}\frac{|B\cap a^G|}{|a^G|}.
\]
Now,
\[
\bigcup_{B'\in B^G} \{(a',B')\,:a'\in a^G,a'\in B'\}=\bigcup_{a'\in a^G} \{(a',B')\,:B'\in B^G,a'\in B'\},
\]
and comparing cardinalities gives $|B^G||B\cap a^G|=|a^G|\sum_{B'\in B^G}1_{B'}(a)$ where we define
\[
 1_{B'}(a):=\left\{\begin{array}{ll}
                    1,& a\in B',\\
                    0,&\textrm{otherwise}.
                   \end{array}\right.
\]
It follows that 
\begin{align*}
\sum_{a\in A}\frac{|B\cap a^G|}{|a^G|} &= \frac{1}{|B^G|}\sum_{a\in A}\sum_{B'\in B^G} 1_{B'}(a)\\
 &= \frac{1}{|B^G|}\sum_{B'\in B^G}\sum_{a\in A} 1_{B'}(a)\\
 &= \frac{1}{|B^G|}\sum_{B'\in B^G}|A\cap B'|,
\end{align*}
as required.
\end{proof}

\begin{prop}\label{p: small sets}
  Suppose $A$ and $B$ are subsets of a finite group $G$.
  Suppose, in addition, that $|A|,|B|< (\minclass(G))^{1/4}$. Then there exists $g\in G$ such that $|A\cdot B^g|=|A|\cdot |B|$.
\end{prop}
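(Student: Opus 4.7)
The plan is to reformulate the conclusion as a non-collision statement and then apply a simple averaging argument over $G$. Observe that $|A \cdot B^g| = |A| \cdot |B|$ is equivalent to the map $(a,b) \mapsto a \cdot b^g$ from $A \times B$ into $G$ being injective. A short calculation shows that injectivity fails exactly when there exist $a_1 \neq a_2$ in $A$ and $b_1 \neq b_2$ in $B$ with $a_2^{-1}a_1 = g^{-1}(b_2 b_1^{-1})g$. Setting $X = A^{-1}A \setminus \{1\}$ and $Y = BB^{-1} \setminus \{1\}$, this is the same as saying $X \cap Y^g \neq \emptyset$, so the task reduces to finding some $g \in G$ with $X \cap Y^g = \emptyset$.

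To produce such a $g$, I would average $|X \cap Y^g|$ over $g \in G$ and show this average is strictly less than $1$; since $|X \cap Y^g|$ is a non-negative integer, some value must then vanish. The average may be computed either by invoking Lemma \ref{l: B} or by direct orbit counting: for conjugate elements $z \in X$ and $w \in Y$ lying in a common class $C$, the number of $g$ with $g^{-1}wg = z$ is $|C_G(z)| = |G|/|C|$, giving
\[
\frac{1}{|G|}\sum_{g \in G}|X \cap Y^g| \;=\; \sum_{C \in \mathcal{C}(G)} \frac{|X \cap C|\cdot|Y \cap C|}{|C|}.
\]

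Since $1 \notin X \cup Y$, only nontrivial classes $C$ contribute to the right-hand side, so every relevant $|C|$ satisfies $|C| \geq \minclass(G)$. Bounding each denominator uniformly and using the trivial estimate $\sum_C |X \cap C|\cdot|Y \cap C| \leq |X|\cdot|Y|$ yields
\[
\frac{1}{|G|}\sum_{g \in G}|X \cap Y^g| \;\leq\; \frac{|X|\cdot|Y|}{\minclass(G)} \;<\; \frac{|A|^2\cdot|B|^2}{\minclass(G)} \;<\; 1,
\]
where the final inequality is exactly the hypothesis $|A|,|B| < \minclass(G)^{1/4}$ together with $|X| \leq |A^{-1}A| \leq |A|^2$ and $|Y| \leq |BB^{-1}| \leq |B|^2$. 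Hence some $g$ achieves $|X \cap Y^g| = 0$, giving $|A \cdot B^g| = |A|\cdot|B|$.

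The only delicate step is the initial reformulation in terms of conjugates of $A^{-1}A$ and $BB^{-1}$; after that the argument is a clean averaging computation, and the exponent $1/4$ in the hypothesis is exactly what is needed to balance the trivial bounds $|A^{-1}A| \leq |A|^2$ and $|BB^{-1}| \leq |B|^2$ against the lower bound $|C| \geq \minclass(G)$ on nontrivial classes.
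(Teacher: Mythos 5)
Your proof is correct. It takes the same averaging route as the paper but streamlines the opening step: the paper argues by contradiction, invokes Lemma~\ref{l: base subset prelim} to conclude $|A^{-1}A\cap B_*B_*^{-1}|\geq 2$ for every conjugate $B_*$ of $B$, and then, after applying Lemma~\ref{l: B}, subtracts off the contribution of the trivial class. You instead observe directly that $|A\cdot B^g|=|A||B|$ is \emph{equivalent} to $X\cap Y^g=\emptyset$ with $X=A^{-1}A\setminus\{1\}$, $Y=BB^{-1}\setminus\{1\}$; stripping the identity at the outset means you never need Lemma~\ref{l: base subset prelim} and there is nothing to subtract later. Both approaches then reduce to the same computation, namely
\[
\frac{1}{|G|}\sum_{g\in G}|X\cap Y^g|=\sum_{C\in\mathcal{C}(G)}\frac{|X\cap C|\,|Y\cap C|}{|C|},
\]
which is exactly what Lemma~\ref{l: B} gives when applied to $X$ and $Y$ (since averaging over conjugates of a set is the same as averaging over $g\in G$). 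What your version buys is a cleaner exact characterization in place of the one-sided inequality of Lemma~\ref{l: base subset prelim}, at the cost of losing the connection the paper draws to Helfgott's lemma; the numerical bookkeeping ($|X||Y|<|A|^2|B|^2<\minclass(G)$) is identical, and the exponent $1/4$ plays the same role in both.
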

\begin{proof}
 Suppose that we cannot find such a $g$. This implies that, for every $B_*$ conjugate to $B$, $|AB_*|<|A|\cdot |B|$. Now Lemma~\ref{l: base subset prelim} yields
 \[
|A|\cdot |B||A^{-1}A\cap B_*B_*^{-1}|>|AB||A^{-1}A\cap B_*B_*^{-1}|\geqslant |A||B|.
 \]
We obtain that $|A^{-1}A\cap B_*B_*^{-1}|\geq 2$. As before, let $\mathcal{C}(G)$ be the set of conjugacy classes in $G$, and let $\mathcal{C}^*(G)$ be the set of non-trivial conjugacy classes in $G$. Lemma~\ref{l: B} implies that
 \begin{align*}
&\sum\limits_{C\in \mathcal{C}(G)} \frac{|C\cap A^{-1}A| \cdot |C\cap BB^{-1}|}{|C|} = \frac{1}{|(BB^{-1})^G|}\sum_{X\in (BB^{-1})^G} |A^{-1}A\cap X|\geqslant 2 \\
\Longrightarrow & \sum\limits_{C\in \mathcal{C}^*(G)} \frac{|C\cap A^{-1}A| \cdot |C\cap BB^{-1}|}{|C|} + 1\geqslant 2 \\
\Longrightarrow &  \sum\limits_{C\in \mathcal{C}^*(G)} \frac{|C\cap A^{-1}A|}{|C|}\geqslant \frac{1}{|BB^{-1}|}.
\end{align*}
In particular we obtain that $|A^{-1}A|\geq \min\limits_{C\in\mathcal{C}^*(G)}\frac{|C|}{|BB^{-1}|}$. Now, since $|A^{-1}A|\leq |A|^2$ and $|BB^{-1}|\leq |B|^2$, the result follows.
\end{proof}

\section{A proof of Theorem~\ref{t: aim1}}\label{s: proof}

\begin{proof}[Proof of Theorem~\ref{t: aim1}]
 Let $\zeta=\frac{1}{32r}$, and note that Proposition~\ref{p: conjugacy size} implies that $|G|^\zeta<(\minclass(G))^{1/4}$. Let $c_0$ be the constant whose existence is guaranteed by Proposition~\ref{p: large sets}. We define $c=2c_0+\zeta$; observe that, since $\zeta$ depends only on $r$, $c$ also depends only on $r$. 

Suppose, first of all, that there exists $i$ such that $|S_i|, |S_{i+1}|\leq |G|^\zeta$. Then Proposition~\ref{p: small sets} implies that there exists $g$ such that $|S_i\cdot S_{i+1}^g|=|S_i|\cdot |S_{i+1}|$. Thus we replace $S_i$ and $S_{i+1}$ with this product; this does not affect the ordering of the sets, nor does it affect the product of the cardinalities of the sets. We repeat this process until there are no ``adjacent'' sets of cardinality less than $|G|^\zeta$.
 
If $k$ is even, then, for every even $i$ between $1$ and $k$ we replace $S_{i-1}$ and $S_i$ by the product of the two. This results in a family of sets with the same ordering, all of which have order at least $|G|^\zeta$, and for which the product of cardinalities is at least $|G|^{c_0+\zeta/2}$. Now Proposition~\ref{p: large sets} implies the result.

If $k$ is odd and $|S_k|\geq|G|^\zeta$, then, for every even $i$ between $1$ and $k$, we replace $S_{i-1}$ and $S_i$ by the product of the two and we retain $S_k$. We obtain a family with the same properties as in the previous paragraph and, once again, Proposition~\ref{p: large sets} implies the result.

If $k$ is odd and $|S_k|<|G|^\zeta$, then for every even $i$ between $1$ and $k-3$ we replace $S_{i-1}$ and $S_i$ by the product of the two; we also replace $S_{k-2}, S_{k-1}$ and $S_k$ by the product of the three. This results in a family of sets with the same ordering, all of which have order at least $|G|^\zeta$, and for which the product of cardinalities is at least $|G|^{c_0}$. Now Proposition~\ref{p: large sets} implies the result and we are done.
\end{proof}

\bibliographystyle{plain}
\bibliography{paper6}

\end{document}